\newtheorem{theorem}{Theorem}
\newtheorem{corollary}[theorem]{Corollary}
\newtheorem{definition}[theorem]{Definition}
\newtheorem{example}[theorem]{Example}
\newtheorem{lemma}[theorem]{Lemma}
\newtheorem{proposition}[theorem]{Proposition}
\newtheorem{remark}[theorem]{Remark}
\newenvironment{proof}[1][Proof]{\noindent\textbf{#1.} }{\ \rule{0.5em}{0.5em}}
\begin{document}

\author{Cristina Flaut, Dana Piciu, Bianca Liana Bercea}
\date{}
\title{Some applications of fuzzy sets in residuated lattices}
\maketitle

\begin{abstract}
In this paper, based on ideals, we investigate residuated lattices from
fuzzy set theory and lattice theory point of view.

Ideals are important concepts in the theory of algebraic structures used for
formal fuzzy logic and \ first, we investigate the lattice of fuzzy ideals
in residuated lattices. Then we present applications of fuzzy sets in Coding
Theory and we study connections between fuzzy sets associated to ideals and
Hadamard codes.

\textbf{Keywords:} Residuated lattice, fuzzy ideal, coding theory
\end{abstract}

\section{Introduction}

The notion of residuated lattice, introduced in \cite{WD} by Ward and
Dilworth, provides an algebraic framework for fuzzy logic.

Managing certain and uncertain information is a priority of artificial
intelligence, in an attempt to imitate human thinking. To make this
possible, in \cite{ZA}, Zadeh introduced the concept of    fuzzy subset of a
nonempty set.

In this paper, we study some applications of fuzzy sets in residuated
lattices.

In \cite{LIU}, this concept is applied to these algebras and the fuzzy
ideals are introduced. In Section 3, we investigate more properties of fuzzy
ideals and we study their lattice structure, which is a Heyting algebra.

In Section 4 we found connections between the fuzzy sets associated to
ideals in particular residuated lattices and Hadamard codes.

\section{Preliminaries}

A residuated lattice is an algebra $(L,\vee ,\wedge ,\odot ,\rightarrow
,0,1),$ with an order $\preceq $ \ such that

\begin{enumerate}
\item[$(i)$] $(L,\vee ,\wedge ,0,1)$ is a bounded lattice;

\item[$(ii)$] $(L,\odot ,1)$ is a commutative monoid;

\item[$(iii)$] $x\odot z\preceq y$ if and only if $x\preceq z\rightarrow y,$
for $x,y,z\in L$ , \textit{see } \cite{WD}.
\end{enumerate}

In this paper, $L$ will be denoted a residuated lattice, unless otherwise
stated.

A \emph{Heyting algebra} (\cite{BD}) is a lattice $(L,\vee ,\wedge )$ with $0
$\ such that for every $a,b\in L,$\ there exists an element $a\rightarrow
b\in L$ (called the \textit{pseudocomplement of\ }$a$\textit{\ with\ respect
to\ }$b)$\ where $a\rightarrow b=\sup \{x\in L:a\wedge x\leq b\}.$ Heyting
algebras are divisible residuated lattices.

For $x,y\in L,$ we define $x\boxplus y=x^{\ast }\rightarrow y^{\ast \ast }$
and $x\uplus y=x^{\ast }\rightarrow y,$ where $x^{\ast }=x\rightarrow 0.$ We
remark that $\boxplus $ is associative and commutative and $\uplus $ is only
associative.

We recall some rules of calculus in residuated lattices, see \cite{BPD},
\cite{T}:

\begin{enumerate}
\item[$(1)$] $x\rightarrow y=1$ if and only if $x\preceq y;$

\item[$(2)$] $x,y\preceq x\uplus y\preceq x\boxplus y,x\boxplus 0=x^{\ast
\ast },x\boxplus x^{\ast }=1,x\boxplus 1=1,x\boxplus y=y\boxplus
x,(x\boxplus y)\boxplus z=x\boxplus (y\boxplus z),$ $x\preceq y\Rightarrow
x\boxplus z\preceq y\boxplus z;$

\item[$(3)$] $x\boxplus y=(x^{\ast }\odot y^{\ast })^{\ast },$ $(x\boxplus
y)^{\ast \ast }=x\boxplus y=x^{\ast \ast }\boxplus y^{\ast \ast },$ for
every $x,y,z\in L.$
\end{enumerate}

An ideal in residuated lattices is a generalization of the similar notion \
from MV-algebras, see \cite{COM}. This concept is introduced in \cite{LIU}
using the operator $\uplus $ which is not commutative. An equivalent
definition is given in \cite{BPD} using $\boxplus .$ We remark that $%
\boxplus $ is associative and commutative and $\uplus $ is only associative.

\begin{definition}
\label{def1} (\cite{BPD}) An \emph{ideal} residuated lattice $L$ is a subset
$I\neq \emptyset $ of $L$\emph{\ }such that:

\begin{enumerate}
\item[$(i_{1})$] For $x\leq i,x\in L,$ $i\in I\Longrightarrow $ $x\in I;$

\item[$(i_{2})$] $i,j\in I$ $\Longrightarrow i\boxplus j\in I.$
\end{enumerate}
\end{definition}

Let $A$ be a non-empty set. If $[0,1]$ is the real unit interval, a fuzzzy
subset of $A$ is a function $\mu :A\longrightarrow \lbrack 0,1],$ see \cite%
{ZA}. If $\mu $ is not a constant map, then $\mu $ \ is a proper fuzzy
subset of $A$.

Let $B\subset A$ be a non-empty subset of $A.$ The map $\mu
_{B}:A\rightarrow \lbrack 0,1],$%
\begin{equation*}
\mu _{B}\left( x\right) =\{%
\begin{array}{c}
1,\text{ if }x\in B \\
0,\text{ if }x\notin B.%
\end{array}%
\end{equation*}%
(the \textit{characteristic function}) is a fuzzy subset.

The notion of fuzzy ideal in residuated lattices is introduced in \cite{LIU}
and some characterizations are obtained.

\begin{definition}
\label{def2} (\cite{LIU}) A\emph{\ fuzzy ideal of }a residuated lattice $L$
is a fuzzy subset $\mu $ of $L$ such that:

\begin{enumerate}
\item[$(fi_{1})$] $x\preceq y\Longrightarrow $ $\mu (x)\geq \mu (y);$

\item[$(fi_{2})$] $\mu (x\uplus y)\geq \min (\mu (x),\mu (y)),$ for every $%
x,y\in L.$
\end{enumerate}
\end{definition}

Two equivalent definitions for fuzzy ideals are given in \cite{LIU}:

A \emph{fuzzy ideal of }$L$ is a fuzzy subset $\mu $ of $L$ such that:

\begin{enumerate}
\item[$(fi_{3})$] $\mu (0)\geq \mu (x),$ for every $x\in L;$

\item[$(fi_{4})$] $\mu (y)\geq \min (\mu (x),\mu ((x^{\ast }\rightarrow
y^{\ast })^{\ast }),$ for every $x,y\in L\Leftrightarrow (fi_{4}^{\prime })$
$\mu (y)\geq \min (\mu (x),\mu (x^{\ast }\odot y)),$ for every $x,y\in L$.
\end{enumerate}

We denote by $\mathcal{I}(L)$ the set of ideals and by $\mathcal{FI}(L)$ the
set of fuzzy ideals of the residuated lattice $L.$

Obviously, the constant functions\textbf{\ }$\mathbf{0,1}:L\rightarrow
\lbrack 0,1],$ $\mathbf{0}(x)=0$ and $\mathbf{1}(x)=1,$ for every $x\in L$
are fuzzy ideals of $L.$

There are two important fuzzy subsets in a residuated lattice $L:$ \ For $%
I\subseteq L$ and $\alpha ,\beta \in \lbrack 0,1]$ with $\alpha >\beta $ is
defined $\mu _{I}:L\rightarrow \lbrack 0,1]$ by
\begin{equation*}
\mu _{I}(x)=\{%
\begin{array}{c}
\alpha ,\text{ if }x\in I \\
\beta ,\text{ if }x\notin I.%
\end{array}%
\end{equation*}%
The fuzzy subset $\mu _{I}$ is a generalization of the characteristic
function of $I,$ denoted $\varphi _{I.}$ Moreover, in \cite{LIU} is proved
that $I\in \mathcal{I}(L)$ iff $\mu _{I}\in \mathcal{F}\mathcal{I}(L).$

\begin{lemma}
\label{lema1} (\cite{LIU}) \ For $\mu \in \mathcal{FI}(L),$ the following
hold:

\begin{enumerate}
\item[$(i)$] $\mu (x)=\mu (x^{\ast \ast })$

\item[$(ii)$] $\mu (x\uplus y)=\min (\mu (x),\mu (y)),$ for every $x,y\in L.$
\end{enumerate}
\end{lemma}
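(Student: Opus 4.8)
The two assertions should follow by feeding well-chosen instances of $x,y$ into the monotonicity axiom $(fi_1)$ and the defining inequality $(fi_2)$, and then combining them. For part $(i)$, recall from rule $(2)$ that $x \preceq x \boxplus x^{\ast\ast}$... more usefully, note that $x^{\ast\ast} = 0^{\ast} \rightarrow x^{\ast\ast}$ is, up to the definitions given, essentially $0 \uplus x$ or $x \boxplus 0$; concretely $x \boxplus 0 = x^{\ast\ast}$ by rule $(2)$. Since $x \preceq x^{\ast\ast}$ always holds in a residuated lattice, $(fi_1)$ gives $\mu(x) \geq \mu(x^{\ast\ast})$ immediately. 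For the reverse inequality I would apply $(fi_2)$ (or rather its stated consequence, the definition via $\uplus$) to $x$ and $0$: since $x \uplus 0 = x^{\ast} \rightarrow 0 = x^{\ast\ast}$, we get $\mu(x^{\ast\ast}) = \mu(x \uplus 0) \geq \min(\mu(x), \mu(0))$, and by $(fi_3)$ (equivalently, by $(fi_1)$ applied to $x \preceq 0$... no — by $\mu(0) \geq \mu(x)$) this minimum equals $\mu(x)$. Hence $\mu(x^{\ast\ast}) \geq \mu(x)$, and the two inequalities give equality.

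**Plan for part $(ii)$.** One direction is exactly $(fi_2)$: $\mu(x \uplus y) \geq \min(\mu(x), \mu(y))$. For the other direction I need $\mu(x \uplus y) \leq \mu(x)$ and $\mu(x \uplus y) \leq \mu(y)$, so that $\mu(x \uplus y) \leq \min(\mu(x), \mu(y))$. By $(fi_1)$ (antitone on $\preceq$), it suffices to show $x \preceq x \uplus y$ and $y \preceq x \uplus y$, and both of these are recorded in rule $(2)$, which states $x, y \preceq x \uplus y \preceq x \boxplus y$. So $\mu(x \uplus y) \leq \mu(x)$ and $\mu(x \uplus y) \leq \mu(y)$, hence $\mu(x \uplus y) \leq \min(\mu(x), \mu(y))$, and combined with $(fi_2)$ this yields the claimed equality.

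**Anticipated obstacle.** There is essentially no deep obstacle here; the only point requiring care is keeping straight which of the several equivalent definitions of fuzzy ideal is being invoked at each step, and making sure the identities $x \uplus 0 = x^{\ast\ast}$ and $x \preceq x^{\ast\ast}$ are used correctly — the former unwinds from the definition $x \uplus y = x^{\ast} \rightarrow y$ with $y = 0$, and the latter is the standard fact $x \preceq x^{\ast\ast}$ in any residuated lattice (provable from $(iii)$, since $x \odot x^{\ast} \preceq 0$). Once these small identities are in hand, both parts reduce to one application of monotonicity plus one application of the ideal inequality, with no genuine calculation to grind through.
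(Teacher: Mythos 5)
Your proof is correct. The paper itself gives no proof of this lemma (it is quoted from the cited reference), but your argument is the standard one: for $(i)$ you combine $x\preceq x^{\ast\ast}$ with the identity $x\uplus 0=x^{\ast}\rightarrow 0=x^{\ast\ast}$ and $\mu(0)\geq\mu(x)$, and for $(ii)$ you combine $(fi_{2})$ with the inequalities $x,y\preceq x\uplus y$ from rule $(2)$; every step checks out.
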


For $\mu _{1}$ and $\mu _{2}$ two fuzzy subsets of $L$ \ is define the order
relation $\mu _{1}\subset $ $\mu _{2}$ if $\mu _{1}(x)\leq $ $\mu _{2}(x),$
for every $x\in L.$

Moreover, for a family $\{\mu _{i}:i\in I\}$ of fuzzy ideals of $L$ we
define $\underset{i\in I}{\cup }\mu _{i},\underset{i\in I}{\cap }\mu
_{i}:L\rightarrow \lbrack 0,1]$ by
\begin{equation*}
(\underset{i\in I}{\cup }\mu _{i})(x)=\sup \{\mu _{i}(x):i\in I\}\text{ and }%
(\underset{i\in I}{\cap }\mu _{i})(x)=\inf \{\mu _{i}(x):i\in I\},\text{ for
every }x\in L,\text{ see }\cite{ZA}.
\end{equation*}

Obviously, $\underset{i\in I}{\cap }\mu _{i}\in \mathcal{FI}(L)$ but, in
general $\underset{i\in I}{\cup }\mu _{i}$ is not a fuzzy ideal of $L,$ see
\cite{LELE}.

We recall (see \cite{BD}) that a complete lattice $(\mathcal{A},\vee ,\wedge
)$ is called Brouwerian if it satisfies the identity $a\wedge (\underset{i}{%
\bigvee }b_{i})=\underset{i}{\bigvee }(a\wedge b_{i}),$ whenever the
arbitrary unions exists. An element $a\in \mathcal{A}$ is called compact if $%
a\leq \vee X$ \ for some $X\subseteq \mathcal{L}$ implies $a\leq \vee X_{1}$
for some finite $X_{1}\subseteq X.$

\begin{remark}
(\cite{BD}) Let $A$ be a set \ of real numbers. We say that $l\in R$ is the
supremum of $A$ if:

\begin{enumerate}
\item $l$ is an upper bound for $A;$

\item $l$ is the least upper bound: for every $\ \epsilon >0$ there is $%
a_{\epsilon }\in A$ such that $a_{\epsilon }>l-\epsilon ,$ i.e., $%
l<a_{\epsilon }+\epsilon .$
\end{enumerate}
\end{remark}

\begin{remark}
\label{rem0} If $a,b$ are real numbers such that $a,b\in \lbrack 0,1]$ and $%
a>b-\epsilon ,$ for every $\epsilon >0,$ then $a\geq b.$ Indeed, if we
suppose that $a<b,$ then there is $\epsilon _{0}>0$ such that $b-a>\epsilon
_{0}>0,$ which is a contradiction with hypothesis.
\end{remark}

\section{The lattice of fuzzy ideals in a residuated lattice $L$}

\begin{lemma}
\label{lema0} Let $x,y,z\in L.$ Then $x^{\ast }\boxplus (y\boxplus z)=1$ iff
$x\preceq y\boxplus z.$
\end{lemma}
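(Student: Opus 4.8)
The plan is to unfold the definition of $\boxplus$ and reduce the claimed equivalence to a statement about the double negation $(\cdot)^{\ast\ast}$. Recall that by definition $a\boxplus b=a^{\ast}\rightarrow b^{\ast\ast}$, so taking $a=x^{\ast}$ and $b=y\boxplus z$ gives
\[
x^{\ast}\boxplus (y\boxplus z)=(x^{\ast})^{\ast}\rightarrow (y\boxplus z)^{\ast\ast}=x^{\ast\ast}\rightarrow (y\boxplus z),
\]
where in the last step I use $(x^{\ast})^{\ast}=x^{\ast\ast}$ together with the identity $(y\boxplus z)^{\ast\ast}=y\boxplus z$ from rule $(3)$. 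By rule $(1)$, the right-hand side equals $1$ if and only if $x^{\ast\ast}\preceq y\boxplus z$. Hence the lemma follows once I show that $x^{\ast\ast}\preceq y\boxplus z$ is equivalent to $x\preceq y\boxplus z$.

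For that equivalence I would argue both directions using rule $(2)$. For the forward direction, note the chain $x\preceq x\uplus 0\preceq x\boxplus 0=x^{\ast\ast}$ supplied by $(2)$, so if $x^{\ast\ast}\preceq y\boxplus z$ then $x\preceq y\boxplus z$ by transitivity of $\preceq$. For the converse, if $x\preceq y\boxplus z$ then the monotonicity clause of $(2)$ (with the added term taken to be $0$, i.e. $x\preceq u\Rightarrow x\boxplus 0\preceq u\boxplus 0$) yields $x^{\ast\ast}\preceq (y\boxplus z)^{\ast\ast}$, and since $(y\boxplus z)^{\ast\ast}=y\boxplus z$ by $(3)$ we conclude $x^{\ast\ast}\preceq y\boxplus z$.

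The only point carrying real content — and the place to be careful — is the collapse $(y\boxplus z)^{\ast\ast}=y\boxplus z$: the element $y\boxplus z$ is ``regular'', so the double negation sitting on the right of the residuum in $a\boxplus b=a^{\ast}\rightarrow b^{\ast\ast}$ does nothing here, which is exactly what lets $x^{\ast}\boxplus(y\boxplus z)$ be rewritten literally as $x^{\ast\ast}\rightarrow(y\boxplus z)$. Once that observation is in place no genuine computation remains; everything else is a direct appeal to $(1)$ and $(2)$, and I do not expect any obstacle beyond bookkeeping which rule justifies which step.
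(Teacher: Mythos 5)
Your proof is correct and follows essentially the same route as the paper's: unfold $x^{\ast}\boxplus(y\boxplus z)$ to $x^{\ast\ast}\rightarrow(y\boxplus z)^{\ast\ast}$, collapse the double negation via $(y\boxplus z)^{\ast\ast}=y\boxplus z$ from rule $(3)$, and then use rule $(1)$ together with $x\preceq x^{\ast\ast}$ and the monotonicity of $(\cdot)^{\ast\ast}$. Your reorganization into the single equivalence $x^{\ast\ast}\preceq y\boxplus z\Leftrightarrow x\preceq y\boxplus z$ is only a cosmetic repackaging of the paper's two directions.
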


\begin{proof}
If $x^{\ast }\boxplus (y\boxplus z)=1,$ then $1=x^{\ast \ast }\rightarrow
(y\boxplus z)^{\ast \ast }=x^{\ast \ast }\rightarrow (y\boxplus z),$ so $%
x\preceq x^{\ast \ast }\preceq y\boxplus z.$

Conversely, $x\preceq y\boxplus z\Rightarrow $ $x^{\ast \ast }\preceq
(y\boxplus z)^{\ast \ast }\Rightarrow x^{\ast \ast }\rightarrow (y\boxplus
z)^{\ast \ast }=1\Rightarrow x^{\ast }\boxplus (y\boxplus z)=1.$
\end{proof}

\begin{lemma}
\label{lema2} If $\mu \in \mathcal{FI}(L),$ then $\mu (x\boxplus y)=\mu
(x\uplus y)=\min (\mu (x),\mu (y^{\ast \ast })),$ for every $x,y\in L.$
\end{lemma}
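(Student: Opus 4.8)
The plan is to reduce everything to Lemma~\ref{lema1} by exhibiting a single algebraic identity linking $\boxplus $ and $\uplus $. First I would note that $\mu (x\uplus y)=\min (\mu (x),\mu (y))$ is exactly Lemma~\ref{lema1}$(ii)$, while Lemma~\ref{lema1}$(i)$ gives $\mu (y)=\mu (y^{\ast \ast })$; combining the two yields $\mu (x\uplus y)=\min (\mu (x),\mu (y^{\ast \ast }))$ with no further work. So the real content of the statement is the equality $\mu (x\boxplus y)=\mu (x\uplus y)$.

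For that, the key step is the identity $x\boxplus y=x^{\ast \ast }\uplus y^{\ast \ast }$, obtained by unwinding definitions: $x^{\ast \ast }\uplus y^{\ast \ast }=(x^{\ast \ast })^{\ast }\rightarrow y^{\ast \ast }=x^{\ast \ast \ast }\rightarrow y^{\ast \ast }=x^{\ast }\rightarrow y^{\ast \ast }=x\boxplus y$, using only the definitions of $\uplus $ and $\boxplus $ together with the standard residuated-lattice identity $x^{\ast \ast \ast }=x^{\ast }$. Applying Lemma~\ref{lema1}$(ii)$ to the pair $(x^{\ast \ast },y^{\ast \ast })$ then gives $\mu (x\boxplus y)=\mu (x^{\ast \ast }\uplus y^{\ast \ast })=\min (\mu (x^{\ast \ast }),\mu (y^{\ast \ast }))$, and Lemma~\ref{lema1}$(i)$ collapses this to $\min (\mu (x),\mu (y))$. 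Chaining the equalities produces $\mu (x\boxplus y)=\mu (x\uplus y)=\min (\mu (x),\mu (y^{\ast \ast }))$.

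If one prefers to avoid invoking $x^{\ast \ast \ast }=x^{\ast }$ directly, the same conclusion follows from a two-sided estimate. Rule~$(2)$ gives $x\uplus y\preceq x\boxplus y$, so $(fi_{1})$ forces $\mu (x\boxplus y)\leq \mu (x\uplus y)$. For the reverse inequality, apply $(fi_{4}^{\prime })$ with the second argument $x\boxplus y$ and the first argument $x$, obtaining $\mu (x\boxplus y)\geq \min (\mu (x),\mu (x^{\ast }\odot (x\boxplus y)))$; then the residuation axiom $(iii)$ yields $x^{\ast }\odot (x^{\ast }\rightarrow y^{\ast \ast })\preceq y^{\ast \ast }$, so by $(fi_{1})$ and Lemma~\ref{lema1}$(i)$ we get $\mu (x^{\ast }\odot (x\boxplus y))\geq \mu (y^{\ast \ast })=\mu (y)$, hence $\mu (x\boxplus y)\geq \min (\mu (x),\mu (y))=\mu (x\uplus y)$.

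I do not expect any genuine obstacle here: once the right identity is spotted, the argument is a short chain of equalities. The only point needing a moment's care is the double-negation bookkeeping — either the identity $x^{\ast \ast \ast }=x^{\ast }$ in the first approach, or the residuation estimate $x^{\ast }\odot (x^{\ast }\rightarrow y^{\ast \ast })\preceq y^{\ast \ast }$ in the second — since everything else is a mechanical application of Lemma~\ref{lema1}.
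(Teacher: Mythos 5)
Your proof is correct and follows essentially the same route as the paper: rewrite $\boxplus $ in terms of $\uplus $ and apply Lemma~\ref{lema1}. The paper uses the even more immediate identity $x\boxplus y=x\uplus y^{\ast \ast }$ (both sides are $x^{\ast }\rightarrow y^{\ast \ast }$ by definition), which avoids the appeal to $x^{\ast \ast \ast }=x^{\ast }$, but this is only a cosmetic difference; your second, residuation-based argument is also valid but unnecessary here.
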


\begin{proof}
From Lemma \ref{lema1}, $\mu (x\boxplus y)=\mu (x\uplus y^{\ast \ast })=\min
(\mu (x),\mu (y^{\ast \ast }))=\min (\mu (x),\mu (y))=\mu (x\uplus y).$
\end{proof}

\begin{proposition}
\label{prop1} Let $\mu $ be a fuzzy subset of $L.$ Then $\mu \in \mathcal{FI}%
(L)$ iff it satisfies the following conditions:

\begin{enumerate}
\item[$(fi_{1})$] $x\preceq y\Longrightarrow $ $\mu (x)\geq \mu (y);$

\item[$(fi_{2}^{\prime })$] $\mu (x\boxplus y)\geq \min (\mu (x),\mu (y)),$
for every $x,y\in L.$
\end{enumerate}
\end{proposition}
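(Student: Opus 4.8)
The plan is to prove the two implications separately, using the already-established lemmas to bridge between the operation $\uplus$ (appearing in Definition~\ref{def2}) and the operation $\boxplus$ (appearing in $(fi_2')$).

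First I would prove the forward direction: assume $\mu\in\mathcal{FI}(L)$. Then $(fi_1)$ holds by definition, so only $(fi_2')$ needs checking. But this is immediate from Lemma~\ref{lema2}, which gives $\mu(x\boxplus y)=\min(\mu(x),\mu(y^{\ast\ast}))$, combined with Lemma~\ref{lema1}$(i)$, which gives $\mu(y^{\ast\ast})=\mu(y)$; hence $\mu(x\boxplus y)=\min(\mu(x),\mu(y))\geq\min(\mu(x),\mu(y))$, so in fact equality holds in $(fi_2')$.

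For the converse, suppose $\mu$ satisfies $(fi_1)$ and $(fi_2')$; I must recover $(fi_2)$, namely $\mu(x\uplus y)\geq\min(\mu(x),\mu(y))$. The key observation is that by rule $(2)$, $x\uplus y\preceq x\boxplus y$, so $(fi_1)$ yields $\mu(x\uplus y)\geq\mu(x\boxplus y)$, and then $(fi_2')$ gives $\mu(x\boxplus y)\geq\min(\mu(x),\mu(y))$. Chaining these two inequalities delivers $(fi_2)$ directly.

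The main subtlety to watch for is that the proof of the forward direction formally invokes Lemma~\ref{lema2} and Lemma~\ref{lema1}, both of which were derived for arbitrary $\mu\in\mathcal{FI}(L)$ using only Definition~\ref{def2}, so there is no circularity. The converse is the genuinely ``new'' content, and there the only thing to verify is the comparison $x\uplus y\preceq x\boxplus y$, which is exactly the first clause of rule $(2)$ in the preliminaries; no separate computation is needed. I do not expect any serious obstacle — the proposition is essentially a repackaging of the preliminary identities — but care should be taken to phrase it so that the forward direction does not accidentally appeal to $(fi_2')$ itself.
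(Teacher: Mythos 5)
Your proposal is correct and follows exactly the paper's own argument: the forward direction is an immediate application of Lemma~\ref{lema2} (together with Lemma~\ref{lema1}$(i)$), and the converse chains $\mu(x\uplus y)\geq\mu(x\boxplus y)\geq\min(\mu(x),\mu(y))$ using $x\uplus y\preceq x\boxplus y$ from rule $(2)$. No gaps; nothing further to add.
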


\begin{proof}
If $\mu \in \mathcal{FI}(L),$ using Lemma \ref{lema2}, $(fi_{2}^{\prime })$
holds.

Conversely, assume that $(fi_{1})$ and $(fi_{2}^{\prime })$ hold and let $%
x,y\in L.$ Since $x\uplus y\preceq x\boxplus y$ we obtain $\min (\mu (x),\mu
(y))\leq \mu (x\boxplus y)\leq \mu (x\uplus y),$ so $(fi_{2})$ hold. Thus, $%
\mu \in \mathcal{FI}(L).$
\end{proof}

\begin{proposition}
\label{prop2} Let $\mu $ be a fuzzy subset of $L.$ The following are
equivalent:

\begin{enumerate}
\item[$(i)$] $\mu \in \mathcal{FI}(L);$

\item[$(ii)$] For every $x,y,z\in L,$ if $(x\boxplus y)\boxplus z^{\ast }=1$
then $\mu (z)\geq \min (\mu (x),\mu (y));$

\item[$(iii)$] For every $x,y,z\in L,$ if $z\preceq x\boxplus y$ then $\mu
(z)\geq \min (\mu (x),\mu (y)).$
\end{enumerate}
\end{proposition}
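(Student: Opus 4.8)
The plan is to prove the equivalence of $(i)$, $(ii)$, and $(iii)$ by establishing the cycle $(i)\Rightarrow(ii)\Rightarrow(iii)\Rightarrow(i)$, leaning heavily on Proposition \ref{prop1} and Lemma \ref{lema0}, which together do most of the work.

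First, for $(i)\Rightarrow(ii)$: assume $\mu\in\mathcal{FI}(L)$ and suppose $(x\boxplus y)\boxplus z^{\ast}=1$ for some $x,y,z\in L$. By Lemma \ref{lema0} (applied with the roles arranged so that $z^{\ast}\boxplus(x\boxplus y)=1$ reads as $z\preceq x\boxplus y$, using commutativity and associativity of $\boxplus$ from rule $(2)$), this hypothesis is equivalent to $z\preceq x\boxplus y$. Then $(fi_{1})$ gives $\mu(z)\geq\mu(x\boxplus y)$, and Proposition \ref{prop1}'s condition $(fi_{2}^{\prime})$ gives $\mu(x\boxplus y)\geq\min(\mu(x),\mu(y))$; chaining these yields $(ii)$.

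Next, $(ii)\Rightarrow(iii)$ is essentially immediate from Lemma \ref{lema0}: if $z\preceq x\boxplus y$, then $z^{\ast}\boxplus(x\boxplus y)=1$, i.e.\ $(x\boxplus y)\boxplus z^{\ast}=1$ after reordering, so $(ii)$ applies and gives $\mu(z)\geq\min(\mu(x),\mu(y))$. Finally, for $(iii)\Rightarrow(i)$, I would verify $(fi_{1})$ and $(fi_{2}^{\prime})$ and invoke Proposition \ref{prop1}. For $(fi_{1})$: if $x\preceq y$, then since $y\preceq y\boxplus 0 = y^{\ast\ast}$ and $x\preceq y\preceq y\boxplus 0$ (using $x,y\preceq x\uplus y\preceq x\boxplus y$ and $x\boxplus 0=x^{\ast\ast}$ from rule $(2)$), we have $x\preceq y\boxplus 0$, so $(iii)$ with the pair $(y,0)$ gives $\mu(x)\geq\min(\mu(y),\mu(0))$; it remains to note $\mu(0)\geq\mu(y)$, which follows by taking $z=y$, and using $y \preceq 0 \boxplus 0$? — here I must be slightly careful, so instead I would first derive $\mu(0)\geq\mu(x)$ for all $x$ directly from $(iii)$ by observing $x\preceq x\boxplus 0$ is not quite it; rather $0 \preceq$ anything is false, so the clean route is: $x \preceq x \boxplus x^\ast$? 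No — the safe choice is $z=x$, $x\preceq x\boxplus 0=x^{\ast\ast}$, giving nothing new. The correct observation is that $0$ is the bottom element, so $0\preceq x\boxplus y$ always, hence $\mu(0)\geq\min(\mu(x),\mu(y))$ for all $x,y$; taking $y=x$ gives $\mu(0)\geq\mu(x)$. Then for $(fi_{1})$, $x\preceq y$ implies $x\preceq y = y\boxplus 0$... again not literally since $y\boxplus 0 = y^{\ast\ast}\succeq y\succeq x$, so $x\preceq y\boxplus 0$ does hold, and $(iii)$ gives $\mu(x)\geq\min(\mu(y),\mu(0))=\mu(y)$. For $(fi_{2}^{\prime})$: take $z=x\boxplus y$; trivially $z\preceq x\boxplus y$, so $(iii)$ gives $\mu(x\boxplus y)\geq\min(\mu(x),\mu(y))$.

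The main obstacle is bookkeeping rather than conceptual depth: I need to apply Lemma \ref{lema0} with the correct permutation of arguments (it is stated as $x^{\ast}\boxplus(y\boxplus z)=1\iff x\preceq y\boxplus z$, whereas condition $(ii)$ writes $(x\boxplus y)\boxplus z^{\ast}=1$), so I will silently use associativity and commutativity of $\boxplus$ from rule $(2)$ to match the forms. The other delicate point, as indicated above, is checking $(fi_{1})$ from $(iii)$ cleanly — the key facts needed are that $0$ is the lattice bottom (so $0\preceq$ everything) and that $y\preceq y\boxplus 0$, both from the preliminaries; with those in hand the argument is routine.
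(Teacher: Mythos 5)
Your proof is correct, and the cycle $(i)\Rightarrow(ii)\Rightarrow(iii)\Rightarrow(i)$ closes, but the return to $(i)$ is handled differently from the paper. The paper proves $(ii)\Rightarrow(i)$ by verifying the alternative characterization $(fi_{3})$--$(fi_{4}^{\prime})$ of fuzzy ideals, which requires the specific algebraic identities $(x\boxplus x)\boxplus 0^{\ast}=1$ and $[x\boxplus (x^{\ast}\odot y)]\boxplus y^{\ast}=(x^{\ast}\odot y)^{\ast}\boxplus (x^{\ast}\odot y)=1$; you instead prove $(iii)\Rightarrow(i)$ by checking $(fi_{1})$ and $(fi_{2}^{\prime})$ and invoking Proposition \ref{prop1}, which reduces everything to two order-theoretic trivialities: $x\boxplus y\preceq x\boxplus y$ gives $(fi_{2}^{\prime})$ on the spot, and $x\preceq y\preceq y\boxplus 0$ together with $\mu(0)\geq \mu(y)$ (itself from $0\preceq x\boxplus x$) gives $(fi_{1})$. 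Your route is more elementary — it never touches $\odot$ or the $(fi_{4}^{\prime})$ form — at the mild cost of needing the auxiliary step $\mu(0)\geq\mu(y)$ to evaluate the minimum; note you could have avoided even that by observing $x\preceq y\preceq y\boxplus y$ and applying $(iii)$ with the pair $(y,y)$, which yields $\mu(x)\geq\min(\mu(y),\mu(y))=\mu(y)$ directly. Your $(i)\Rightarrow(ii)$ also differs cosmetically (you pass through Lemma \ref{lema0} and $(fi_{1})$ where the paper computes $(x\boxplus y)^{\ast}\preceq z^{\ast}$ and uses $\mu(z)=\mu(z^{\ast\ast})$), but these are the same computation in different clothing. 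The visible hesitation in your derivation of $\mu(0)\geq\mu(x)$ resolves correctly, so there is no gap.
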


\begin{proof}
$(i)\Longrightarrow (ii).$ Let $x,y,z\in L$ such that $(x\boxplus y)\boxplus
z^{\ast }=1.$ Then $1=(x\boxplus y)^{\ast }\longrightarrow z^{\ast }$ so, $%
(x\boxplus y)^{\ast }\preceq z^{\ast }$. Thus, using Lemma \ref{lema1} and
Proposition \ref{prop1} we have $\mu (z)=\mu (z^{\ast \ast })\geq \mu
((x\boxplus y)^{\ast \ast })=\mu (x\boxplus y)\geq \min (\mu (x),\mu (y)).$

$(ii)\Longrightarrow (i).$ Since $(x\boxplus x)\boxplus 0^{\ast }=1,$ by
hypothesis, we deduce $(fi_{3}).$ Also, since $[x\boxplus (x^{\ast }\odot
y)]\boxplus y^{\ast }=$ $(x\boxplus y^{\ast })\boxplus (x^{\ast }\odot
y)=(x^{\ast }\odot y)^{\ast }\boxplus (x^{\ast }\odot y)=1,$ we obtain $%
(fi_{4}^{\prime }).$Thus, $\mu \in \mathcal{FI}(L).$

$(ii)\Leftrightarrow (iii).$ Using Lemma \ref{lema0}, $z\preceq x\boxplus y$
iff $(x\boxplus y)\boxplus z^{\ast }=1.$
\end{proof}

If $\mu $ is a fuzzy subset of a residuated lattice $L,$ we denote by $%
\overline{\mu }$ the smallest fuzzy ideal containing $\mu .$ $\overline{\mu }
$ \ is called the fuzzy ideal generated by $\mu $ and it is characterized in
\cite{LIU}, Theorem 3.19 and \cite{LELE}, Theorem 5.

In the following, we show a new characterization:

\begin{proposition}
\label{prop3} Let $L$ be a residuated lattice and $\mu ,\mu ^{\prime }$ $%
:L\rightarrow \lbrack 0,1]$ be fuzzy subsets of $L$ such that%
\begin{equation*}
\mu ^{\prime }(x)=\sup \{\min (\mu (x_{1}),...,\mu (x_{n})):x\preceq
x_{1}\boxplus ...\boxplus x_{n},n\in N,x_{1},....,x_{n}\in L\},\text{ for
every }x\in L.\text{ }
\end{equation*}%
Then $\mu ^{\prime }=\overline{\mu }.$
\end{proposition}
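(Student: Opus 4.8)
The plan is to show that $\mu'$ is a fuzzy ideal, that $\mu \subset \mu'$, and that $\mu'$ is contained in every fuzzy ideal containing $\mu$; the uniqueness of the smallest such ideal then forces $\mu' = \overline{\mu}$. First I would verify $\mu \subset \mu'$: for any $x \in L$, taking $n = 1$ and $x_1 = x$ in the defining supremum (since $x \preceq x$) shows $\mu'(x) \geq \mu(x)$. Next I would check that $\mu'$ satisfies $(fi_1)$ and $(fi_2')$ of Proposition \ref{prop1}. For $(fi_1)$, if $x \preceq y$, then any decomposition $y \preceq x_1 \boxplus \cdots \boxplus x_n$ yields $x \preceq x_1 \boxplus \cdots \boxplus x_n$ by transitivity, so the supremum defining $\mu'(x)$ is taken over a set containing that defining $\mu'(y)$, whence $\mu'(x) \geq \mu'(y)$.

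For $(fi_2')$ I would argue as follows: fix $x, y \in L$. Given $\epsilon > 0$, choose decompositions $x \preceq x_1 \boxplus \cdots \boxplus x_m$ with $\min(\mu(x_1),\ldots,\mu(x_m)) > \mu'(x) - \epsilon$ and $y \preceq y_1 \boxplus \cdots \boxplus y_n$ with $\min(\mu(y_1),\ldots,\mu(y_n)) > \mu'(y) - \epsilon$. Using property $(2)$ (monotonicity and associativity/commutativity of $\boxplus$) one gets $x \boxplus y \preceq x_1 \boxplus \cdots \boxplus x_m \boxplus y_1 \boxplus \cdots \boxplus y_n$, so this combined decomposition is admissible in the supremum defining $\mu'(x \boxplus y)$, giving
\begin{equation*}
\mu'(x \boxplus y) \geq \min(\mu(x_1),\ldots,\mu(x_m),\mu(y_1),\ldots,\mu(y_n)) > \min(\mu'(x) - \epsilon, \mu'(y) - \epsilon) = \min(\mu'(x),\mu'(y)) - \epsilon.
\end{equation*}
Since $\epsilon > 0$ is arbitrary, Remark \ref{rem0} yields $\mu'(x \boxplus y) \geq \min(\mu'(x),\mu'(y))$. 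By Proposition \ref{prop1}, $\mu' \in \mathcal{FI}(L)$.

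Finally I would show minimality: let $\nu \in \mathcal{FI}(L)$ with $\mu \subset \nu$. Fix $x \in L$ and any decomposition $x \preceq x_1 \boxplus \cdots \boxplus x_n$. By Proposition \ref{prop2}(iii) applied iteratively (or by an induction on $n$ using $(fi_2')$ for $\nu$), $\nu(x) \geq \min(\nu(x_1),\ldots,\nu(x_n)) \geq \min(\mu(x_1),\ldots,\mu(x_n))$. Taking the supremum over all such decompositions gives $\nu(x) \geq \mu'(x)$, i.e. $\mu' \subset \nu$. Combining the three facts, $\mu'$ is the smallest fuzzy ideal containing $\mu$, so $\mu' = \overline{\mu}$.

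The main obstacle I anticipate is the careful handling of the suprema in the proof of $(fi_2')$: one cannot simply "pick a decomposition achieving the supremum" since the supremum need not be attained, so the $\epsilon$-argument together with Remark \ref{rem0} is essential, and one must be mildly careful that the min over the concatenated list of indices is the min of the two separate mins. The induction in the minimality step is routine given Proposition \ref{prop2}(iii), treating $x_1 \boxplus \cdots \boxplus x_{n-1}$ as a single element and using associativity of $\boxplus$ from property $(2)$.
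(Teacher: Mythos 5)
Your proof is correct and follows essentially the same route as the paper's: the same $\epsilon$-argument (via Remark \ref{rem0}) for concatenating decompositions of $x$ and $y$ into one for $x\boxplus y$, the same observation that $\mu \subset \mu'$, and the same minimality argument against an arbitrary fuzzy ideal containing $\mu$. The only, harmless and arguably cleaner, difference is that you verify $(fi_{1})$ directly (decompositions of $y$ are decompositions of $x$) and then $(fi_{2}^{\prime})$ via Proposition \ref{prop1}, whereas the paper checks criterion $(iii)$ of Proposition \ref{prop2} and therefore needs a second $\epsilon$-argument to pass from $\mu'(x\boxplus y)$ to $\mu'(z)$ for $z\preceq x\boxplus y$.
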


\begin{proof}
First, using Proposition \ref{prop2}, we will prove that $\mu ^{\prime }\in
\mathcal{FI}(L).$

Let $x,y,z\in L$ such that $z\preceq x\boxplus y$ and $\epsilon >0$
arbitrary.

By definition of $\mu ^{\prime }$, for $x,y\in L$ there are $n,m\in N$ and $%
x_{1},....,x_{n},y_{1},....,y_{m}\in L$ such that
\begin{equation*}
x\preceq x_{1}\boxplus ...\boxplus x_{n}\text{ and }\mu ^{\prime
}(x)<\epsilon +\min (\mu (x_{1}),...,\mu (x_{n}))\text{ }
\end{equation*}

and%
\begin{equation*}
y\preceq y_{1}\boxplus ...\boxplus y_{m}\text{ and }\mu ^{\prime
}(y)<\epsilon +\min (\mu (y_{1}),...,\mu (y_{m})).\text{ }
\end{equation*}

Then $x\boxplus y\preceq x_{1}\boxplus ...\boxplus x_{n}\boxplus
y_{1}\boxplus ...\boxplus y_{m}$ and $\mu ^{\prime }(x\boxplus y)=\sup
\{\min (\mu (t_{1}),...,\mu (t_{k})):x\boxplus y\preceq t_{1}\boxplus
...\boxplus t_{k},k\in N,t_{1},....,t_{k}\in L\}$ $\geq \min (\mu
(x_{1}),...,\mu (x_{n}),\mu (y_{1}),...,\mu (y_{m}))$ $=\min (\min (\mu
(x_{1}),...,\mu (x_{n})),\min (\mu (y_{1}),...,\mu (y_{m})))$ $>\min (\mu
^{\prime }(x)-\epsilon ,\mu ^{\prime }(y)-\epsilon )=\min (\mu ^{\prime
}(x),\mu ^{\prime }(y))-\epsilon .$

Since $\epsilon $ is arbitrary, using Remark \ref{rem0}, we deduce that $\mu
^{\prime }(x\boxplus y)\geq \min (\mu ^{\prime }(x),\mu ^{\prime }(y)).$

Similarly, for $x\boxplus y$ there are $p\in N$ and $s_{1},....,s_{p}\in L$
such that
\begin{equation*}
x\boxplus y\preceq s_{1}\boxplus ...\boxplus s_{p}\text{ and }\mu ^{\prime
}(x\boxplus y)<\epsilon +\min (\mu (s_{1}),...,\mu (s_{p})).\text{ }
\end{equation*}

Thus, $z\preceq s_{1}\boxplus ...\boxplus s_{p},$ so $\mu ^{\prime }(z)=\sup
\{\min (\mu (z_{1}),...,\mu (z_{r})):z\preceq z_{1}\boxplus ...\boxplus
z_{r},r\in N,z_{1},....,z_{r}\in L\}$ $\geq \min (\mu (s_{1}),...,\mu
(s_{p}))$ $>\mu ^{\prime }(x\boxplus y)-\epsilon .$

We obtain $\mu ^{\prime }(z)\geq \mu ^{\prime }(x\boxplus y).$ Finally, we
conclude that $\mu ^{\prime }(z)\geq \min (\mu ^{\prime }(x),\mu ^{\prime
}(y)),$ so $\mu ^{\prime }\in \mathcal{FI}(L).$

Obviously, $\mu \subset \mu ^{\prime }$ since for every $x\in L,x\preceq
x\boxplus x,$ so $\mu ^{\prime }(x)\geq \min (\mu (x),\mu (x))=\mu (x).$

Also, if $\mu ^{^{\prime \prime }}\in \mathcal{FI}(L)$ such that $\mu
\subset \mu ^{^{\prime \prime }\text{ }}$then $\mu ^{\prime }(x)=\sup \{\min
(\mu (x_{1}),...,\mu (x_{n})):x\preceq x_{1}\boxplus ...\boxplus x_{n},n\in
N,x_{1},....,x_{n}\in L\}\leq $ $\sup \{\min (\mu ^{\prime \prime
}(x_{1}),...,\mu ^{\prime \prime }(x_{n})):x\preceq x_{1}\boxplus
...\boxplus x_{n},n\in N,x_{1},....,x_{n}\in L\}\leq \mu ^{\prime \prime
}(x),$ for every $x\in L,$ since $x\preceq x_{1}\boxplus ...\boxplus
x_{n}\Rightarrow $ $\mu ^{\prime \prime }(x)\geq \mu ^{\prime \prime
}(x_{1}\boxplus ...\boxplus x_{n})=\min (\mu ^{\prime \prime
}(x_{1}),...,\mu ^{\prime \prime }(x_{n})).$Thus, $\mu ^{\prime }\subset \mu
^{^{\prime \prime }\text{ }}$ , so $\mu ^{\prime }$ is the least fuzzy ideal
of $L$ containing $\mu ,$ i.e., $\mu ^{\prime }=\overline{\mu }.$
\end{proof}

\begin{theorem}
\label{th1}The lattice $(\mathcal{FI}(L),\subset $ $)$ is a complete
Brouwerian lattice.
\end{theorem}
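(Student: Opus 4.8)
The plan is to verify the three ingredients of a complete Brouwerian lattice in turn: that $(\mathcal{FI}(L),\subset)$ is a complete lattice, that arbitrary joins exist and are described concretely, and that the infinite distributive law $\mu\wedge(\bigvee_i\nu_i)=\bigvee_i(\mu\wedge\nu_i)$ holds.

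First I would establish completeness. We already know from the excerpt that $\bigcap_{i\in I}\mu_i\in\mathcal{FI}(L)$ for any family, and that $\mathbf{1}:L\to[0,1]$ is a fuzzy ideal, so $(\mathcal{FI}(L),\subset)$ has arbitrary meets and a top element; a standard order-theoretic argument then gives arbitrary joins as well, namely $\bigvee_{i\in I}\mu_i=\bigcap\{\nu\in\mathcal{FI}(L):\mu_i\subset\nu\text{ for all }i\}$, which is the smallest fuzzy ideal above all the $\mu_i$. Hence $(\mathcal{FI}(L),\subset)$ is a complete lattice, with meet $\bigwedge_i\mu_i=\bigcap_i\mu_i$ (pointwise infimum).

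Next I would give the explicit description of the join. By Proposition \ref{prop3}, $\bigvee_{i\in I}\mu_i=\overline{\mu}$ where $\mu(x)=\sup_{i\in I}\mu_i(x)$ is the pointwise supremum; concretely,
\begin{equation*}
\Bigl(\bigvee_{i\in I}\mu_i\Bigr)(x)=\sup\Bigl\{\min\bigl(\mu_{i_1}(x_1),\dots,\mu_{i_n}(x_n)\bigr):x\preceq x_1\boxplus\cdots\boxplus x_n,\ n\in\mathbf{N},\ i_1,\dots,i_n\in I,\ x_1,\dots,x_n\in L\Bigr\}.
\end{equation*}
This formula is what makes the distributivity computation tractable.

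Finally, the main obstacle is the Brouwerian identity $\mu\wedge(\bigvee_i\nu_i)=\bigvee_i(\mu\wedge\nu_i)$. The inclusion $\supseteq$ is immediate from monotonicity of $\wedge$ and $\bigvee$. For $\subseteq$, fix $x\in L$ and $\epsilon>0$; since $\bigl(\mu\wedge(\bigvee_i\nu_i)\bigr)(x)=\min\bigl(\mu(x),(\bigvee_i\nu_i)(x)\bigr)$, the explicit join formula gives indices $i_1,\dots,i_n$ and elements $x_1,\dots,x_n$ with $x\preceq x_1\boxplus\cdots\boxplus x_n$ and $\min(\nu_{i_1}(x_1),\dots,\nu_{i_n}(x_n))>(\bigvee_i\nu_i)(x)-\epsilon$. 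Using $(fi_1)$ for $\mu$ (so $\mu(x_j)\geq\mu(x_1\boxplus\cdots\boxplus x_n)\geq\mu(x)$ once one also invokes $(fi_2')$ of Proposition \ref{prop1}, or more simply noting $\mu(x)\geq\mu(x_1\boxplus\cdots\boxplus x_n)$ need not hold — instead one bounds $\mu(x)$ from below by passing to a common refinement), one shows each $(\mu\wedge\nu_{i_j})(x_j)=\min(\mu(x_j),\nu_{i_j}(x_j))$ is close to the desired quantity, and then the join formula for $\bigvee_i(\mu\wedge\nu_i)$ applied along $x\preceq x_1\boxplus\cdots\boxplus x_n$ yields $\bigl(\bigvee_i(\mu\wedge\nu_i)\bigr)(x)>\min(\mu(x),(\bigvee_i\nu_i)(x))-\epsilon$. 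Letting $\epsilon\to0$ via Remark \ref{rem0} closes the inclusion. The delicate point is handling $\mu(x)$ correctly: one should rewrite $x\preceq x_1\boxplus\cdots\boxplus x_n$ and use that $\mu(x)\geq\min(\mu(x),\dots,\mu(x))$ trivially while $\nu_{i_j}(x_j)$ are the terms actually varying, so that $\min(\mu(x),\nu_{i_1}(x_1),\dots,\nu_{i_n}(x_n))$ is a legitimate term in the sup defining $\bigl(\bigvee_i(\mu\wedge\nu_i)\bigr)(x)$ — this requires each $\mu\wedge\nu_{i_j}$ to dominate the pair $(\mu(x),\nu_{i_j}(x_j))$ appropriately, which follows since $(\mu\wedge\nu_{i_j})(x_j)=\min(\mu(x_j),\nu_{i_j}(x_j))$ and $\mu(x_j)\geq\mu(x)$ fails in general, so instead one takes the refinement $x\preceq x\boxplus(x_1\boxplus\cdots\boxplus x_n)$ and the terms $\mu(x),\nu_{i_1}(x_1),\dots,\nu_{i_n}(x_n)$ together. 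Carrying this bookkeeping through is the real content of the proof.
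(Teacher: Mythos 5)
Your setup (completeness via arbitrary pointwise meets plus the top element $\mathbf{1}$, the explicit description of the join through Proposition \ref{prop3}, and the easy inclusion $\bigvee_i(\mu\sqcap\nu_i)\subset\mu\sqcap\bigl(\bigvee_i\nu_i\bigr)$) matches the paper and is fine. The gap is exactly at the step you yourself call ``the real content'': you correctly observe that from $x\preceq x_1\boxplus\cdots\boxplus x_n$ one only gets $\mu(x)\geq\min_j\mu(x_j)$, so $\mu(x_j)\geq\mu(x)$ can fail, but the repair you propose --- passing to the decomposition $x\preceq x\boxplus(x_1\boxplus\cdots\boxplus x_n)$ --- does not close it. The term this decomposition contributes to the sup defining $\bigl(\bigvee_i(\mu\sqcap\nu_i)\bigr)(x)$ is $\min\bigl(\lambda(x),\lambda(x_1),\dots,\lambda(x_n)\bigr)$ with $\lambda=\bigcup_i(\mu\sqcap\nu_i)$, and each factor $\lambda(x_j)$ is still capped by $\mu(x_j)$; if some $\mu(x_j)$ is very small (say $0$), this term collapses to $0$ even though the quantity you need to bound from below, $\min\bigl(\mu(x),\nu_{i_1}(x_1),\dots,\nu_{i_n}(x_n)\bigr)$, may be large. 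Appending $x$ as an extra summand changes nothing, because the problematic factors $\mu(x_j)$ remain.

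What is actually needed, and what the paper supplies, is a replacement of the $x_t$ by smaller elements: one constructs $y_1,\dots,y_n$ (via $y_t^{\ast}=(x_1\boxplus\cdots\boxplus x_{t-1})\boxplus(y_{t+1}\boxplus\cdots\boxplus y_n)\boxplus x^{\ast}$, with the obvious boundary cases for $t=1$ and $t=n$) such that simultaneously $y_t\preceq x_t$, $x\preceq y_1\boxplus\cdots\boxplus y_n$, and $y_t^{\ast\ast}\preceq x^{\ast\ast}$, whence $\mu(y_t)\geq\mu(x^{\ast\ast})=\mu(x)$ by Lemma \ref{lema1}. Then $\min(\mu(x),\nu_{i_t}(x_t))\leq\min(\mu(y_t),\nu_{i_t}(y_t))=(\mu\sqcap\nu_{i_t})(y_t)$, and the decomposition $x\preceq y_1\boxplus\cdots\boxplus y_n$ feeds these terms into the sup for $\bigl(\bigvee_i(\mu\sqcap\nu_i)\bigr)(x)$, after which the $\epsilon$-argument and Remark \ref{rem0} finish as you describe. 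Without this construction (or an equivalent device) the inclusion $\mu\sqcap\bigl(\bigvee_i\nu_i\bigr)\subset\bigvee_i(\mu\sqcap\nu_i)$ is not established, so the proposal as written does not prove the Brouwerian identity.
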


\begin{proof}
\textbf{\ }If $(\mu _{i})_{i\in I}$ is a family of fuzzy ideals of $L,$ then
the infimum of this family is $\underset{i\in I}{\sqcap }\mu _{i}=\underset{%
i\in I}{\cap }\mu _{i}$ and the supremum is $\underset{i\in I}{\sqcup }\mu
_{i}=\overline{\underset{i\in I}{\cup }\mu _{i}}.$

Obviously, the lattice $(\mathcal{FI}(L),\subset $ $)$ is complete.

To prove that $\mathcal{FI}(L)$ is a Brouwerian lattice we show that for
every fuzzy ideal $\mu $ and every family $(\mu _{i})_{i\in I}$ of fuzzy
ideals, $\mu \sqcap (\underset{i\in I}{\sqcup }\mu _{i})=\underset{i\in I}{%
\sqcup }(\mu \sqcap \mu _{i}).$ Clearly, $\underset{i\in I}{\sqcup }(\mu
\sqcap \mu _{i})\subset \mu \sqcap (\underset{i\in I}{\sqcup }\mu _{i}),$ so
we prove only that $\mu \sqcap (\underset{i\in I}{\sqcup }\mu _{i})\subset
\underset{i\in I}{\sqcup }(\mu \sqcap \mu _{i}).$

For this, let $x\in L$ and $\epsilon >0$ arbitrary.

Since $(\underset{i\in I}{\sqcup }\mu _{i})(x)=\sup \{\min ((\underset{i\in I%
}{\cup }\mu _{i})(z_{1}),...,(\underset{i\in I}{\cup }\mu
_{i})(z_{m})):x\preceq z_{1}\boxplus ...\boxplus z_{m},m\in
N,z_{1},....,z_{m}\in L\},$ there are $n\in N$ and $x_{1},....,x_{n}\in L$
such that
\begin{equation*}
x\preceq x_{1}\boxplus ...\boxplus x_{n}\text{ and }(\underset{i\in I}{%
\sqcup }\mu _{i})(x)<\epsilon +\min ((\underset{i\in I}{\cup }\mu
_{i})(x_{1}),...,(\underset{i\in I}{\cup }\mu _{i})(x_{n})).
\end{equation*}

Using the definition of $\underset{i\in I}{\cup }\mu _{i},$ for every $%
k=1,...,n$ there is $i_{k}\in N$ such that
\begin{equation*}
\text{ }(\underset{i\in I}{\cup }\mu _{i})(x_{k})<\epsilon +\mu
_{i_{k}}(x_{k}).
\end{equation*}

Thus,
\begin{equation*}
\text{ }(\underset{i\in I}{\sqcup }\mu _{i})(x)<\epsilon +\min (\epsilon
+\mu _{i_{1}}(x_{1}),...,\epsilon +\mu _{i_{n}}(x_{n})).
\end{equation*}

Then
\begin{equation*}
(\mu \sqcap (\underset{i\in I}{\sqcup }\mu _{i}))(x)<2\epsilon +\min (\mu
(x),\mu _{i_{1}}(x_{1}),...,\mu _{i_{n}}(x_{n})).
\end{equation*}

We consider $y_{1},....,y_{n}\in L$ such that
\begin{equation*}
y_{1}^{\ast }=(y_{2}\boxplus ...\boxplus y_{n})\boxplus x^{\ast }
\end{equation*}%
\begin{equation*}
y_{n}^{\ast }=(x_{1}\boxplus ...\boxplus x_{n-1})\boxplus x^{\ast }
\end{equation*}

and for every $t=2,...,n-1$
\begin{equation*}
y_{t}^{\ast }=(x_{1}\boxplus ...\boxplus x_{t-1})\boxplus (y_{t+1}\boxplus
...\boxplus y_{n})\boxplus x^{\ast }.
\end{equation*}

Obviously, for every $t=1,...,n,$ $y_{t}^{\ast }\boxplus x=1,$ so, $%
y_{t}^{\ast \ast }\preceq x^{\ast \ast }$ and $\mu (x)=\mu (x^{\ast \ast
})\leq \mu (y_{t}^{\ast \ast })=\mu (y_{t}).$

Moreover, $(y_{1}\boxplus ...\boxplus y_{n})\boxplus x^{\ast }=y_{1}\boxplus
y_{1}^{\ast }=1,$ so using Lemma \ref{lema0}, we deduce that%
\begin{equation*}
x\preceq y_{1}\boxplus ...\boxplus y_{n}.
\end{equation*}

Also, by Lemma \ref{lema0}, since $x\preceq x_{1}\boxplus ...\boxplus x_{n}$
we have that $y_{n}^{\ast }\boxplus x_{n}=(x_{1}\boxplus ...\boxplus
x_{n})\boxplus x^{\ast }=1$ and for every $t=1,...,n-1,$ $y_{t}^{\ast
}\boxplus x_{t}=[(x_{1}\boxplus ...\boxplus x_{t})\boxplus $ $%
(y_{t+2}\boxplus ...\boxplus y_{n}\boxplus x^{\ast }]\boxplus
y_{t+1}=y_{t+1}^{\ast }\boxplus y_{t+1}=1.$

So,%
\begin{equation*}
y_{t}\preceq x_{t},\text{ for every }t=1,...,n.
\end{equation*}

Thus, we deduce that
\begin{equation*}
\mu _{i_{k}}(x_{k})\leq \mu _{i_{k}}(y_{k}),\text{ for every }k=1,...,n.
\end{equation*}

We conclude that
\begin{equation*}
\min (\mu (x),\mu _{i_{k}}(x_{k}))\leq \min (\mu (y_{k}),\mu
_{i_{k}}(y_{k}))=(\mu \sqcap \mu _{i_{k}})(y_{k}),\text{ for every }%
k=1,...,n.
\end{equation*}

Thus,
\begin{equation*}
(\mu \sqcap (\underset{i\in I}{\sqcup }\mu _{i}))(x)<2\epsilon +\min ((\mu
\sqcap \mu _{i_{1}})(y_{1}),...(\mu \sqcap \mu _{i_{n}})(y_{n})).
\end{equation*}

Since $(\mu \sqcap \mu _{i_{k}})(y_{k})\leq (\underset{i\in I}{\sqcup }(\mu
\sqcap \mu _{i}))(y_{k}),$ for every $k=1,...,n,$ using the fact that $%
x\preceq y_{1}\boxplus ...\boxplus y_{n},$ we obtain
\begin{equation*}
(\mu \sqcap (\underset{i\in I}{\sqcup }\mu _{i}))(x)<2\epsilon +\min ((%
\underset{i\in I}{\sqcup }(\mu \sqcap \mu _{i}))(y_{1}),...,(\underset{i\in I%
}{\sqcup }(\mu \sqcap \mu _{i}))(y_{n}))<2\epsilon +(\underset{i\in I}{%
\sqcup }(\mu \sqcap \mu _{i}))(x).
\end{equation*}

But $\epsilon $ is arbitrary, so from Remark \ref{rem0},
\begin{equation*}
(\mu \sqcap (\underset{i\in I}{\sqcup }\mu _{i}))(x)\leq (\underset{i\in I}{%
\sqcup }(\mu \sqcap \mu _{i}))(x).
\end{equation*}
\end{proof}

By \cite{BD} and Theorem \ref{th1} we deduce that:

\begin{proposition}
\label{prop4}If $\mu _{1},$ $\mu _{2}\in \mathcal{FI}(L)$ then

\begin{enumerate}
\item[$(i)$] $\mu _{1}\rightsquigarrow $ $\mu _{2}=\sup \{\mu \in \mathcal{FI%
}(L):\mu _{1}\sqcap \mu \subset \mu _{2}\}=\sqcup \{\mu \in \mathcal{FI}%
(L):\mu _{1}\sqcap \mu \subset \mu _{2}\}\in \mathcal{FI}(L);$

\item[$(ii)$] If $\mu \in \mathcal{FI}(L),$ then $\mu _{1}\sqcap \mu \subset
\mu _{2}$ if and only if $\mu \sqsubset \mu _{1}\rightsquigarrow \mu _{2}.$
\end{enumerate}
\end{proposition}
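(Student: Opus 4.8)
The plan is to derive Proposition \ref{prop4} as a routine consequence of the fact, established in Theorem \ref{th1}, that $(\mathcal{FI}(L),\subset)$ is a complete Brouwerian lattice, together with the general lattice-theoretic fact (cited from \cite{BD}) that a complete Brouwerian lattice is a complete Heyting algebra, hence has a relative pseudocomplement operation. So the first step is to set $\mu_1 \rightsquigarrow \mu_2 := \sqcup\{\mu \in \mathcal{FI}(L) : \mu_1 \sqcap \mu \subset \mu_2\}$, which is well defined because $\mathcal{FI}(L)$ is complete (the supremum $\sqcup$ is $\overline{\cup}$ as in the proof of Theorem \ref{th1}), and note that the family is nonempty since $\mathbf{0} \in \mathcal{FI}(L)$ satisfies $\mu_1 \sqcap \mathbf{0} = \mathbf{0} \subset \mu_2$. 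This gives membership in $\mathcal{FI}(L)$, settling the last clause of $(i)$; the two descriptions of $\mu_1 \rightsquigarrow \mu_2$ as a $\sup$ of values and as a join of fuzzy ideals coincide because the supremum in the complete lattice of a set that is closed under the relevant joins is computed via $\overline{\cup}$, and one checks it agrees pointwise.

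For part $(ii)$, the reverse implication is immediate: if $\mu \sqsubset \mu_1 \rightsquigarrow \mu_2$, then by monotonicity of $\sqcap$ we have $\mu_1 \sqcap \mu \subset \mu_1 \sqcap (\mu_1 \rightsquigarrow \mu_2)$, so it suffices to show $\mu_1 \sqcap (\mu_1 \rightsquigarrow \mu_2) \subset \mu_2$. This is exactly where the Brouwerian identity from Theorem \ref{th1} is needed: writing $\mu_1 \rightsquigarrow \mu_2 = \sqcup_{j \in J} \nu_j$ where $(\nu_j)_{j \in J}$ enumerates all fuzzy ideals $\nu$ with $\mu_1 \sqcap \nu \subset \mu_2$, the infinite distributivity gives $\mu_1 \sqcap (\sqcup_j \nu_j) = \sqcup_j (\mu_1 \sqcap \nu_j) \subset \mu_2$, since each $\mu_1 \sqcap \nu_j \subset \mu_2$ and $\mu_2$ is an upper bound. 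The forward implication is then trivial: if $\mu_1 \sqcap \mu \subset \mu_2$, then $\mu$ is one of the $\nu_j$'s, hence $\mu \sqsubset \sqcup_j \nu_j = \mu_1 \rightsquigarrow \mu_2$.

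The main obstacle — really the only nontrivial point — is the step $\mu_1 \sqcap (\mu_1 \rightsquigarrow \mu_2) \subset \mu_2$, which relies essentially on the complete distributivity $\mu \sqcap (\sqcup_i \mu_i) = \sqcup_i (\mu \sqcap \mu_i)$ proved as the hard part of Theorem \ref{th1}; without it the join of the admissible family could in principle fail to satisfy the defining inequality. Once that is in hand, everything else is formal. I would present the proof in two short paragraphs: first defining $\mu_1 \rightsquigarrow \mu_2$ and verifying it lies in $\mathcal{FI}(L)$ and is the largest element of the admissible family (which gives $(i)$ and the forward direction of $(ii)$), then invoking the Brouwerian identity for the reverse direction of $(ii)$, and finally remarking that $(ii)$ characterizes $\mu_1 \rightsquigarrow \mu_2$ uniquely so that the two displayed formulas in $(i)$ describe the same fuzzy ideal.
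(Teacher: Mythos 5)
Your proof is correct and is essentially the same argument the paper intends: the paper gives no explicit proof, simply deducing the proposition "by \cite{BD} and Theorem \ref{th1}," and your write-up is exactly the standard unpacking of that citation (completeness gives existence of the join of the admissible family, and the Brouwerian identity gives $\mu_1\sqcap(\mu_1\rightsquigarrow\mu_2)\subset\mu_2$, hence the adjunction). The only point worth tightening is the brief remark that $\sup$ and $\sqcup$ of the admissible family agree, which deserves a one-line justification (the family is directed, being closed under finite joins by distributivity, so its pointwise supremum is already a fuzzy ideal), but this does not affect the correctness of the approach.
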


Moreover,

\begin{corollary}
\label{cor1} $(\mathcal{FI}(L),\sqcap ,\sqcup ,\rightsquigarrow ,\mathbf{0})$
is a Heyting algebra.
\end{corollary}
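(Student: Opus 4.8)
The plan is to obtain the statement as an essentially immediate consequence of Theorem~\ref{th1} and Proposition~\ref{prop4}, with only bookkeeping in between. First I would recall from Theorem~\ref{th1} that $(\mathcal{FI}(L),\subset)$ is a complete lattice, with meet $\underset{i\in I}{\sqcap}\mu_i=\underset{i\in I}{\cap}\mu_i$ and join $\underset{i\in I}{\sqcup}\mu_i=\overline{\underset{i\in I}{\cup}\mu_i}$; in particular, being complete, it has a least element, which is exactly the constant fuzzy ideal $\mathbf{0}$. This $\mathbf{0}$ is the element that will serve as the bottom $0$ demanded by the definition of a Heyting algebra recalled in the Preliminaries.

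Next I would check the one substantive clause of that definition: for all $\mu_1,\mu_2\in\mathcal{FI}(L)$ there must exist an element $\mu_1\rightsquigarrow\mu_2\in\mathcal{FI}(L)$ equal to $\sup\{\mu\in\mathcal{FI}(L):\mu_1\sqcap\mu\subset\mu_2\}$. This is precisely Proposition~\ref{prop4}(i): the set $S=\{\mu\in\mathcal{FI}(L):\mu_1\sqcap\mu\subset\mu_2\}$ is nonempty since $\mu_1\sqcap\mathbf{0}=\mathbf{0}\subset\mu_2$, so by completeness its join $\sqcup S$ exists inside $\mathcal{FI}(L)$, and Proposition~\ref{prop4}(i) identifies this join with $\mu_1\rightsquigarrow\mu_2$; moreover the supremum taken in the Heyting-algebra sense coincides with the lattice join $\sqcup$ because $\mathcal{FI}(L)$ is closed under the relevant (arbitrary) joins. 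The only place the Brouwerian hypothesis of Theorem~\ref{th1} is actually used is to guarantee that this supremum genuinely behaves as a relative pseudocomplement: from the infinite distributive law $\mu_1\sqcap\big(\underset{\mu\in S}{\sqcup}\mu\big)=\underset{\mu\in S}{\sqcup}(\mu_1\sqcap\mu)$ and $\mu_1\sqcap\mu\subset\mu_2$ for each $\mu\in S$, one gets $\mu_1\sqcap(\mu_1\rightsquigarrow\mu_2)\subset\mu_2$, which together with the trivial implication $\mu_1\sqcap\mu\subset\mu_2\Rightarrow\mu\sqsubset\mu_1\rightsquigarrow\mu_2$ is exactly the residuation equivalence of Proposition~\ref{prop4}(ii).

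Finally I would conclude that $(\mathcal{FI}(L),\sqcap,\sqcup)$ is a lattice with least element $\mathbf{0}$ in which every pair $\mu_1,\mu_2$ admits the pseudocomplement $\mu_1\rightsquigarrow\mu_2$ of $\mu_1$ with respect to $\mu_2$, so by the definition of Heyting algebra recalled in Section~2, $(\mathcal{FI}(L),\sqcap,\sqcup,\rightsquigarrow,\mathbf{0})$ is a Heyting algebra. I expect no real obstacle: all the content has already been absorbed into Theorem~\ref{th1} and Proposition~\ref{prop4}, and the only care needed is to match the ingredients of the abstract definition (bottom element $=\mathbf{0}$; the defining $\sup$ in the lattice $\mathcal{FI}(L)$ $=\sqcup S=\mu_1\rightsquigarrow\mu_2$) against what those two results provide.
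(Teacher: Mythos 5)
Your proposal is correct and follows essentially the same route as the paper, which states the corollary as an immediate consequence of Theorem~\ref{th1} (completeness and the Brouwerian distributive law) together with Proposition~\ref{prop4} (existence of the relative pseudocomplement $\mu_1\rightsquigarrow\mu_2$), citing \cite{BD} for the standard fact that a complete Brouwerian lattice is a Heyting algebra. You have merely spelled out the bookkeeping (the bottom element $\mathbf{0}$ and the verification that the supremum satisfies the residuation equivalence) that the paper leaves implicit.
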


\section{\textbf{Applications of fuzzy sets in Coding Theory}}

\subsection{\textbf{\ Symmetric difference of ideals in a finite commutative
and unitary ring}}

In this section, we will present an application of a fuzzy sets on some
special cases of residuated algebras, namely Boolean algebras.

Let $A$ be a non-empty set and $B\subset A$ be a non-empty subset of $A.$
The map $\mu _{B}:A\rightarrow \lbrack 0,1],$%
\begin{equation*}
\mu _{B}\left( x\right) =\left\{
\begin{array}{c}
{1,~x\in B} \\
{0,~x\notin B}%
\end{array}%
\right. ,
\end{equation*}%
is called the \textit{characteristic function} of the set $B$.

For two nonempty sets, $A,B,$ we define the \textit{symmetric difference }of
the sets $A,B,$ \thinspace
\begin{equation*}
A\Delta B=\left( A-B\right) \cup \left( B-A\right) =\left( A\cup B\right)
-\left( B\cap A\right)
\end{equation*}

\begin{proposition}
\bigskip \label{prop3.1} \textit{We consider} $A$ \textit{and} $B$ \textit{%
two nonempty sets.}

\begin{enumerate}
\item[$(i)$] \textit{We have }$\mu _{A\Delta B}=0$ \textit{if and only if} $%
A=B;$

\item[$(ii)$] (\cite{KP}, p. 215). \textit{The following relation is true}
\begin{equation*}
\mu _{A\Delta B}=\mu _{A}+\mu _{B}-2\mu _{A}\mu _{B}.
\end{equation*}

\item[$(iii)$] \textit{Let} $A_{i},i\in \{1,2,...,n\}$ \textit{be} $n$
\textit{nonempty sets. The following relation is true}%
\begin{equation*}
\mu _{A_{1}\Delta A_{2}\Delta ...\Delta A_{n}}=\sum \mu _{A_{i}}-2\underset{%
i\neq j}{\sum }\mu _{A_{i}}\mu _{A_{j}}+2^{2}\underset{i\neq j\neq k}{\sum }%
\mu _{A_{i}}\mu _{A_{j}}\mu _{A_{k}}-...+\left( -1\right) ^{n-1}2^{n-1}\mu
_{A_{1}}\mu _{A_{2}}...\mu _{A_{n}}.
\end{equation*}%
\textit{\ }
\end{enumerate}
\end{proposition}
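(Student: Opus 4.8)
The plan is to dispatch the three parts essentially independently, each reducing to a pointwise verification on an arbitrary element $x$, since two characteristic functions agree iff the sets they describe agree.

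For part $(i)$: I would argue that $\mu_{A\Delta B}=0$ means $A\Delta B=\emptyset$ (a characteristic function is identically zero precisely when its set is empty), and then recall that $(A-B)\cup(B-A)=\emptyset$ forces both $A-B=\emptyset$ and $B-A=\emptyset$, i.e. $A\subseteq B$ and $B\subseteq A$, hence $A=B$; the converse is immediate. One mild point of care: $A\Delta B$ could be empty while $A,B$ themselves are not, so the ``nonempty sets'' hypothesis is only needed so that the $\mu$-notation makes sense, not for the equivalence itself.

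For part $(ii)$: I would fix $x$ and split into the four cases according to whether $x\in A$ and whether $x\in B$, tabulating the value of each side. The left side $\mu_{A\Delta B}(x)$ is $1$ exactly when $x$ lies in exactly one of $A,B$. The right side $\mu_A(x)+\mu_B(x)-2\mu_A(x)\mu_B(x)$ evaluates to $0+0-0=0$, $1+0-0=1$, $0+1-0=1$, and $1+1-2=0$ in the four cases, matching. This is the citation-backed identity, so I would simply present the case table and cite \cite{KP}.

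For part $(iii)$: the natural approach is induction on $n$, using part $(ii)$ as the base case $n=2$ (the case $n=1$ being trivial). For the inductive step, write $\mu_{A_1\Delta\cdots\Delta A_{n+1}} = \mu_{(A_1\Delta\cdots\Delta A_n)\Delta A_{n+1}} = \mu_C + \mu_{A_{n+1}} - 2\mu_C\mu_{A_{n+1}}$ where $C = A_1\Delta\cdots\Delta A_n$, substitute the inductive expression for $\mu_C$, and collect terms by degree; the $k$-fold products coming from $\mu_C$ untouched, the $(k-1)$-fold products from $\mu_C$ multiplied by $\mu_{A_{n+1}}$, and the single new term $\mu_{A_{n+1}}$ must reassemble into the claimed alternating sum with coefficients $(-1)^{j-1}2^{j-1}$. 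The main obstacle — really the only one — is the bookkeeping: verifying that the coefficient of a generic $j$-fold product $\mu_{A_{i_1}}\cdots\mu_{A_{i_j}}$ on the right-hand side for $n+1$ sets is consistent whether or not the index $n+1$ appears among $i_1,\dots,i_j$. Concretely, a $j$-fold product not involving $A_{n+1}$ keeps its coefficient $(-1)^{j-1}2^{j-1}$ from $\mu_C$; a $j$-fold product involving $A_{n+1}$ arises as $-2\mu_{A_{n+1}}$ times a $(j-1)$-fold product of the first $n$ sets, whose coefficient was $(-1)^{j-2}2^{j-2}$, giving $-2\cdot(-1)^{j-2}2^{j-2} = (-1)^{j-1}2^{j-1}$ as well. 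Since the coefficients match in both cases, the induction closes. I would note in passing that the sums in the displayed formula are understood over unordered index sets (each product counted once), which is what makes the coefficient count work cleanly.
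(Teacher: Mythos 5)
Your proposal is correct and complete. The paper itself supplies no proof of this proposition --- part $(ii)$ is simply attributed to \cite{KP} and parts $(i)$ and $(iii)$ are stated without argument --- so there is no authorial proof to compare against; your pointwise case analysis for $(i)$--$(ii)$ and the induction on $n$ via $\mu_{C\Delta A_{n+1}}=\mu_C+\mu_{A_{n+1}}-2\mu_C\mu_{A_{n+1}}$ for $(iii)$ are the natural arguments and they do close. The point you rightly flag --- that the sums in $(iii)$ must be read over unordered index sets, each product counted once --- is exactly what makes the coefficient bookkeeping $-2\cdot(-1)^{j-2}2^{j-2}=(-1)^{j-1}2^{j-1}$ work; an equivalent non-inductive sanity check is that a point lying in exactly $m$ of the $A_i$ contributes $\sum_{j\geq 1}(-1)^{j-1}2^{j-1}\binom{m}{j}=\tfrac{1-(-1)^m}{2}$ to the right-hand side, which is $1$ for $m$ odd and $0$ for $m$ even, as membership in $A_1\Delta\cdots\Delta A_n$ requires.
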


\begin{remark}
\label{rem3.2}  Let $\left( R,+,\cdot \right) $ be a unitary and a
commutative ring and $I_{1},I_{2},...,I_{s}$ be ideals in $R$.

\begin{enumerate}
\item[$(i)$] For $i\neq j,$ we have $I_{i}\Delta I_{j}$ is not an ideal in $R
$. Indeed, $0\notin I_{i}\Delta I_{j}$, therefore $I_{i}\Delta I_{j}$ is not
an ideal in $R$;

\item[$(ii)$] In general, $I_{1}\Delta I_{2}\Delta ...\Delta I_{n}$, for $%
n\geq 2$, is not an ideal in $R$. Indeed, if $n\geq 3$ and $x,y\in
I_{1}\Delta I_{2}\Delta ...\Delta I_{n},$ supposing that $x\in I_{j}$ and $%
y\in I_{k},$ we have that $xy\in I_{j}$ and $xy\in I_{k},$ therefore $xy\in
I_{j}\cap I_{k}$. We obtain that $\mu _{I_{1}\Delta I_{2}\Delta ...\Delta
I_{n}}\left( xy\right) =\mu _{I_{j}}\left( xy\right) +\mu _{I_{k}}\left(
xy\right) -2\mu _{_{I_{j}}}\mu _{_{I_{k}}}\left( xy\right) =0,$ then $%
xy\notin I_{1}\Delta I_{2}\Delta ...\Delta I_{n}$ and $I_{1}\Delta
I_{2}\Delta ...\Delta I_{n}$ is not an ideal in $R$.\medskip
\end{enumerate}
\end{remark}

\begin{definition}
\label{def3.3} If $A=\{a_{1},a_{2},...,a_{n}\}$ is a finite set with $n$
elements and \thinspace $B\,$\ is a nonempty subset of $A$, we consider the
vector $c_{B}=(c_{i})_{i\in \{1,2,...,n\}},$ where $c_{i}=0$ if $a_{i}\notin
B$ and $c_{i}=1$ if $a_{i}\in B$. The vector $c_{B}$ is called the \textit{%
codeword} attached to the set $B$. We can represent $c_{B}$ as a string $%
c_{B}=c_{1}c_{2}...c_{n}$.
\end{definition}

\subsection{\textbf{Linear codes}}

We consider $p$ a prime number and $\mathbf{F}_{p^{n}}$ a finite field of
characteristic $p$. $\mathbf{F}_{p^{n}}$ is a vector space over the field $%
\mathbb{Z}_{p}.$ A \textit{linear code} $\mathcal{C}$ of length $n$ and
dimension $k$ is a vector subspace of the vector space $\mathbf{F}_{p^{n}}$.
If $p=2$, we call this code a binary linear code. The elements of $\mathcal{C%
}$ are called \textit{codewords}. The \textit{weight} of a codeword is the
number of its elements that are nonzero and the \textit{distance} between
two codewords is the \textit{Hamming distance} between them, that means
represents the number of elements in which they differ. The distance $d$ of
the linear code is the minimum weight of its nonzero codewords, or
equivalently, the minimum distance between distinct codewords. A linear code
of length $n$, dimension $k$, and distance $d$ is called an $[n,k,d]$ code
(or, more precisely, $[n,k,d]_{p}$ code). \textit{The rate} of a code is $%
\frac{k}{n},$ that means it is an amount such that for each $k$ bits of
transmitted information, the code generates $n$ bits of data, in which $n-k$
are redundant. Since $\mathcal{C}$ is a vector subspace of dimension $k$, it
is generated by bases of $k$ vectors. The elements of such a basis can be
represented as a rows of a matrix $G$, named \textit{generating matrix}
associated to the code $\mathcal{C}$. This matrix is a matrix of $k\times n$
type. (see [Gu; 10]). The codes of the type $[2^{t},t,2^{\tau -1}]_{2}$, $%
t\geq 2$, are called \textit{Hadamard codes}. Hadamard codes are a class of
error-correcting codes (see [KK; 12], p. 183). Named after french
mathematician Jacques Hadamard, these codes are used for error detection and
correction when transmitting messages are over noisy or unreliable channels.
Usually, Hadamard codes are constructed by using Hadamard matrices of
Sylvester's type, but there are Hadamard codes using arbitrary Hadamard
matrix not necessarily of the above type (see [CR; 20]). As we can see,
Hadamard codes have a good distance property, but the rate is of a low level
(see [Gu; 10]).

\begin{remark}
\label{rem3.4} (\cite{Gu}, Definition 16). The generating matrix of a
Hadamard code of the type $[2^{t},t,2^{\tau -1}]_{2}$, $t\geq 2,$ has as
columns all $t$-bits vectors over $\mathbb{Z}_{2}$ (vectors of length $t$).
\end{remark}

\section{\textbf{\ Connections between Boolean algebras and Hadamard codes}}

In the following, we present a particular case of residuated lattices, named
\textit{MV-algebras.}\medskip

\begin{definition}
\label{def3.5} (\cite{CHA}) An abelian monoid $\left( X,\theta ,\oplus
\right) $ is called \textit{MV-algebra} if and only if we have an operation $%
"^{\prime }"$ such that:

\begin{enumerate}
\item[$(i)$] $(x^{\prime })^{\prime }=x;$

\item[$(ii)$] $x\oplus \theta ^{\prime }=\theta ^{\prime };$

\item[$(iii)$] $\left( x^{\prime }\oplus y\right) ^{\prime }\oplus y=$ $%
\left( y^{\prime }\oplus x\right) ^{\prime }\oplus x$, for all $x,y\in X.$
We denote it by $\left( X,\oplus ,^{\prime },\theta \right) .\medskip $
\end{enumerate}
\end{definition}

\begin{definition}
\label{def3.6}\textbf{\ }(\cite{COM}, Definition 4.2.1) An algebra $\left(
W,\circ ,\overline{\phantom{x}},1\right) $ of type $\left( 2,1,0\right) ~$is
called a \textit{Wajsberg algebra (}or\textit{\ W-algebra)} if and only if
for every $x,y,z\in W$, we have:

\begin{enumerate}
\item[$(i)$] $1\circ x=x;$

\item[$(ii)$] $\left( x\circ y\right) \circ \left[ \left( y\circ z\right)
\circ \left( x\circ z\right) \right] =1;$

\item[$(iii)$]  $\left( x\circ y\right) \circ y=\left( y\circ x\right) \circ
x;$

\item[$(iv)$] $\left( \overline{x}\circ \overline{y}\right) \circ \left(
y\circ x\right) =1.\medskip $
\end{enumerate}
\end{definition}

\begin{remark}
\label{rem3.7}\textbf{\ }(\cite{COM}, Lemma 4.2.2 and Theorem 4.2.5)

\begin{enumerate}
\item[$(i)$] If $\left( W,\circ ,\overline{\phantom{x}},1\right) $ is a
Wajsberg algebra, defining the following multiplications
\begin{equation*}
x\odot y=\overline{\left( x\circ \overline{y}\right) }
\end{equation*}%
and
\begin{equation*}
x\oplus y=\overline{x}\circ y,
\end{equation*}%
for all $x,y\in W$, we obtain that $\left( W,\oplus ,\odot ,\overline{%
\phantom{x}},0,1\right) $ is an MV-algebra.

\item[$(ii)$] If $\left( X,\oplus ,\odot ,^{\prime },\theta ,1\right) $ is
an MV-algebra, defining on $X$ the operation%
\begin{equation*}
x\circ y=x^{\prime }\oplus y,
\end{equation*}%
it results that $\left( X,\circ ,^{\prime },1\right) $ is a Wajsberg
algebra.\medskip
\end{enumerate}
\end{remark}

\begin{definition}
\label{def3.8} (\cite{FRT}) If $\left( W,\circ ,\overline{\phantom{x}}%
,1\right) $ is a Wajsberg algebra, on $W$ we define the following binary
relation
\begin{equation}
x\leq y~\text{if~and~only~if~}x\circ y=1.  \tag{3.2.}
\end{equation}%
This relation is an order relation, called \textit{the natural order
relation on }$W$\textit{.\medskip }
\end{definition}

\begin{definition}
\label{def3.9}(\cite{CT}) Let $\left( X,\oplus ,^{\prime },\theta \right) $
be an MV-algebra. The nonempty subset $I\subseteq X$ is called an \textit{%
ideal} in $X$ if and only if the following conditions are satisfied:

\begin{enumerate}
\item[$(i)$] $\theta \in I$, where $\theta =\overline{1};$

\item[$(ii)$] $x\in I$ and $y\leq x$ implies $y\in I;$

\item[$(iii)$] If $x,y\in I$, then $x\oplus y\in I$.\medskip
\end{enumerate}
\end{definition}

We remark that the concept of ideal in residuated lattices is a
generalization for the notion of ideal in MV-algebras.

\begin{definition}
\label{def3.10}\textbf{\ }(\cite{COM}, p. 13) An ideal $P$ of the MV-algebra
$\left( X,\oplus ,^{\prime },\theta \right) $ is a \textit{prime} ideal in $X
$ if and only if for all $x,y\in P$ we have $(x^{\prime }\oplus y)^{\prime
}\in P$ or $(y^{\prime }\oplus x)^{\prime }\in P$.\medskip
\end{definition}

\begin{definition}
\label{def3.11}\textbf{\ (\cite{GA}}, p. 56) Let $\left( W,\circ ,\overline{%
\phantom{x}},1\right) $ be a Wajsberg algebra and let $I\subseteq W$ be a
nonempty subset. $I$ is called an \textit{ideal} in $W$ if and only if the
following conditions are fulfilled:

\begin{definition}
\begin{enumerate}
\item[$(i)$]  $\theta \in I$, where $\theta =\overline{1};$

\item[$(ii)$]  $x\in I$ and $y\leq x$ implies $y\in I;$

\item[$(iii)$]  If $x,y\in I$, then $\overline{x}\circ y\in I$.
\end{enumerate}
\end{definition}
\end{definition}

\begin{definition}
\label{def3.12} Let $\left( W,\circ ,\overline{\phantom{x}},1\right) $ be a
Wajsberg algebra and $P\subseteq W$ be a nonempty subset. $P$ is called a
\textit{prime} \textit{ideal} in $W$ if and only if for all $x,y\in P$ we
have $(x\circ y)^{\prime }\in P$ or $(y\circ x)^{\prime }\in P$.\medskip
\end{definition}

\begin{definition}
\label{def3.13}The algebra $\left( B,\vee \wedge ,\partial ,0,1\right) $,
equipped with two binary operations $\vee $ and $\wedge $ and a unary
operation$\ \partial $,$~$is called a \textit{Boolean algebra} if and only
if $\left( B,\vee \wedge \right) ~$is a distributive and a complemented
lattice with
\begin{equation*}
x\vee \partial x=1,
\end{equation*}%
\begin{equation*}
x\wedge \partial x=0,
\end{equation*}%
for all elements $x\in B$. The elements $0$ and $1$ are the least and the
greatest elements from the algebra $B.\medskip $
\end{definition}

\begin{remark}
\label{rem3.14}

\begin{enumerate}
\item[$(i)$] Boolean algebras represent a particular case of MV-algebras.
Indeed, if $\left( B,\vee \wedge ,\partial ,0,1\right) $ is a Boolean
algebra, then can be easily checked that $\left( B,\vee ,\partial ,0\right) $
is an MV-algebra;

\item[$(ii)$] A \textit{Boolean ring} $\left( B,+,\cdot \right) $ is a
unitary and commutative ring such that $x^{2}=x,$ for each $x\in B;$

\item[$(iii)$] To a Boolean algebra $\left( B,\vee \wedge ,\partial
,0,1\right) $ we can associate a Boolean ring $\left( B,+,\cdot \right) ,$
where
\begin{eqnarray*}
x+y &=&\left( x\vee y\right) \wedge \partial \left( x\wedge y\right) , \\
x\cdot y &=&x\wedge y,
\end{eqnarray*}%
for all $x,y\in B.$ Conversely, if $\left( B,+,\cdot \right) $ is a Boolean
ring, we can associate a Boolean algebra $\left( B,\vee \wedge ,\partial
,0,1\right) ,$ where
\begin{eqnarray*}
x\vee y &=&x+y+xy, \\
x\wedge y &=&xy, \\
\partial x &=&1+x;
\end{eqnarray*}

\item[$(iv)$] Let $\left( I,+,\cdot \right) $ be an ideal in a Boolean ring $%
\left( B,+,\cdot \right) $, therefore $I$ is an ideal in the Boolean algebra
$\left( B,\vee \wedge ,\partial ,0,1\right) $. The converse is also
true.\medskip
\end{enumerate}
\end{remark}

\begin{remark}
\label{rem3.15}

\begin{enumerate}
\item[$(i)$]  If $X$ is an MV-algebra and $I$ is an ideal (prime ideal) in $X
$, therefore on the Wajsberg algebra structure, obtained as in Remark 3.7.
ii), we have that the same set $I$ is an ideal (prime ideal) in $X$ as
Wajsberg algebra. The converse is also true.

\item[$(ii)$] Finite MV-algebras of order $2^{t}$ are Boolean algebras.

\item[$(iii)$] Between ideals in a Boolean algebra and ideals in the
associated Boolean ring it is a bijective correspondence, that means, if $I$
is an ideal in a \ Boolean algebra, the same set $I,$ with the corresponded
multiplications, is an ideal in the associated Boolean ring. The converse is
also true.\medskip\
\end{enumerate}
\end{remark}

We consider $\left( R,+,\cdot \right) $ a finite, commutative, unitary ring
and $I,J$ be two ideals. Let $c_{I}$ and $c_{J}$ be the codewords attached
to these sets, as in Definition \ref{def3.3}.\medskip

\begin{proposition}
\label{prop3.16}\textbf{\ }\textit{With the above notations, we have that:}

\begin{enumerate}
\item[$(i)$] \textit{To the set} $I\Delta J$ \textit{correspond the codeword}
$c_{I}+c_{J}=c_{I}\oplus c_{J}$\textit{, where} $\oplus $ \textit{is the
XOR-operation;}

\item[$(ii)$] \textit{If} $I_{1},I_{2},...,I_{q}$ \textit{are ideals in the
ring} $R$ \textit{and} $c_{I_{1}},c_{I_{2}},...,c_{I_{q}}$ \textit{are the
attached codewords, therefore the vectors} $c_{I_{1}},c_{I_{2}},...,c_{I_{q}}
$ \textit{are linearly independent vectors. }
\end{enumerate}
\end{proposition}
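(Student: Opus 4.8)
The plan is as follows. For part $(i)$, note first that the codeword $c_B$ of Definition~\ref{def3.3} attached to a subset $B\subseteq R$ is exactly the tuple $(\mu_B(a_1),\dots,\mu_B(a_n))$ of values of the characteristic function $\mu_B$, where $R=\{a_1,\dots,a_n\}$. Hence the $i$-th coordinate of $c_{I\Delta J}$ equals $\mu_{I\Delta J}(a_i)$, which by Proposition~\ref{prop3.1}$(ii)$ is $\mu_I(a_i)+\mu_J(a_i)-2\mu_I(a_i)\mu_J(a_i)$. Inspecting the four cases for $(\mu_I(a_i),\mu_J(a_i))\in\{0,1\}^{2}$ shows that this number is precisely the XOR of the two bits, which in $\mathbb{Z}_{2}$ coincides with their sum. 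Reading this off coordinatewise gives $c_{I\Delta J}=c_I\oplus c_J=c_I+c_J$, the addition being taken in $\mathbb{Z}_{2}^{n}$.

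For part $(ii)$ I would argue by contradiction, turning a hypothetical $\mathbb{Z}_{2}$-linear dependence into a statement about iterated symmetric differences. Suppose $\sum_{j\in S}c_{I_j}=0$ in $\mathbb{Z}_{2}^{n}$ for some nonempty $S\subseteq\{1,\dots,q\}$; since over $\mathbb{Z}_{2}$ equal summands cancel in pairs, we may assume the ideals $I_j$, $j\in S$, are pairwise distinct. Iterating part $(i)$ (and using that $\Delta$ is commutative and associative) yields $\sum_{j\in S}c_{I_j}=c_{D}$, where $D$ is the symmetric difference of all the $I_j$ with $j\in S$; since a codeword is the indicator vector of its set, the hypothesis forces $D=\emptyset$. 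Equivalently, every element of $R$ belongs to an \emph{even} number of the ideals $\{I_j:j\in S\}$.

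It remains to contradict this, and this is where the structure of $R$ must enter — the step I expect to be the main obstacle. I would choose an ideal $I_{j_0}$ that is maximal (for inclusion) in the finite family $\{I_j:j\in S\}$ and then exhibit an element $x\in I_{j_0}$ lying in no other member of the family; such an $x$ then lies in exactly one of the ideals, an \emph{odd} number, which is the contradiction sought. In the Boolean situation that this section is really about (Remark~\ref{rem3.14}, Remark~\ref{rem3.15}), every ideal $I$ of $R$ possesses a greatest element $m_I$, the join of the atoms below it, and $m_{I_{j_0}}$ lies in $I_j$ exactly when $I_{j_0}\subseteq I_j$; maximality together with distinctness of $I_{j_0}$ then forces $j=j_0$, so $x=m_{I_{j_0}}$ does the job. (For an arbitrary finite commutative unitary ring this last step needs care, so one should read the hypotheses together with the Boolean structure transported across the correspondences of Remark~\ref{rem3.15}.) Once the witness element is in hand the contradiction is immediate, hence no nontrivial dependence exists and $c_{I_1},\dots,c_{I_q}$ are linearly independent.
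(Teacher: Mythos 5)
Your part $(i)$ is the same coordinatewise computation the paper dismisses as ``straightforward'': reading $c_{B}$ as the value vector of $\mu _{B}$ and applying Proposition \ref{prop3.1}$(ii)$, so nothing to add there. For part $(ii)$ you take a genuinely different route. The paper reduces a dependence (with all coefficients taken equal to $1$) to the identity $I_{1}\Delta \cdots \Delta I_{q-1}=I_{q}$ and then invokes Remark \ref{rem3.2}$(ii)$, which asserts that an iterated symmetric difference of ideals is never an ideal; you instead argue by parity, choosing a maximal member $I_{j_{0}}$ of the (pairwise distinct) subfamily and exhibiting an element lying in $I_{j_{0}}$ and in no other member, contradicting the conclusion that every element lies in an even number of the $I_{j}$.

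Your hesitation about the general ring case is not excess caution: both the proposition in the stated generality and Remark \ref{rem3.2}$(ii)$, on which the paper's proof rests, fail for an arbitrary finite commutative unitary ring. In $R=\mathbb{Z}_{2}[x,y]/(x^{2},xy,y^{2})$ the ideals $(x)=\{0,x\}$, $(y)=\{0,y\}$, $(x+y)=\{0,x+y\}$ and $m=(x,y)$ are pairwise distinct, yet $(x)\Delta (y)\Delta (x+y)=m$ \emph{is} an ideal, so $(x)\Delta (y)\Delta (x+y)\Delta m=\emptyset $ and $c_{(x)}+c_{(y)}+c_{(x+y)}+c_{m}=0$: four distinct ideals with linearly dependent codewords. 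Hence your retreat to the Boolean setting is not a matter of ``care'' but a necessity, and your witness argument --- the greatest element $m_{I_{j_{0}}}$ of a maximal member of the family lies in $I_{j}$ iff $I_{j_{0}}\subseteq I_{j}$, hence in exactly one member, an odd number --- is precisely what makes the claim correct in the only setting where it is subsequently used (Theorem \ref{th3.17}, finite Boolean algebras and their Boolean rings, via Remark \ref{rem3.15}). In short, your proof is sound where the proposition is true, and the gap you flag is a gap in the proposition itself, not in your argument.
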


\begin{proof}
$(i).$ It is clear, by straightforward computations.

$(ii).$ Let $R$ has $n$ elements. We work on the vector space $V=\underset{%
n-time}{\underbrace{\mathbb{Z}_{2}\times \mathbb{Z}_{2}\times ...\times
\mathbb{Z}_{2}}}$ over the field $\mathbb{Z}\,_{2}$. We consider $\alpha
_{1}c_{I_{1}}+...\alpha _{q}c_{I_{q}}=0,$where $\alpha _{1},...\alpha
\,_{q}\in \mathbb{Z}_{2}.$ Supposing that $\alpha _{1}=...=\alpha \,_{q}=1$,
we have that $\alpha _{1}c_{I_{1}}+...\alpha _{q}c_{I_{q}}=0$ implies that $%
I_{1}\Delta I_{2}\Delta ...\Delta I_{q}=\emptyset $. Without losing the
generality, since symmetric difference is associative, from here we have
that $I_{1}\Delta I_{2}\Delta ...\Delta I_{q-1}=I_{q}$, which is false,
since $I_{q}$ has an ideal structure and $I_{1}\Delta I_{2}\Delta ...\Delta
I_{q-1}$ is not an ideal, from Remark \ref{rem3.2}.
\end{proof}

With the above notations, we consider a matrix $M_{C},$ with rows the
codewords associated to the ideals $I_{1},I_{2},...,I_{q}$,%
\begin{equation*}
M_{C}=\left(
\begin{array}{c}
c_{I_{1}} \\
c_{I_{2}} \\
... \\
c_{I_{q}}%
\end{array}%
\right) .
\end{equation*}%
Since these rows are linearly independent vectors, the matrix $M_{C}$ can be
considered as a generating matrix for a code, called \textit{the code
associated to the ideals} $I_{1},I_{2},...,I_{q}$, denoted $\mathcal{C}%
_{I_{1}I_{2},...I_{q}}$.$_{{}}\medskip $

\begin{theorem}
\label{th3.17} \textit{Let} $\left( B,\vee \wedge ,\partial ,0,1\right) $
\textit{be a finite Boolean algebra of order }$2^{n}$. \textit{The following
statements are true:}

\begin{enumerate}
\item[$(i)$] \textit{The algebra} $B$ \textit{has} $n$ \textit{ideals of
order} $2^{n-1}$\textit{;}

\item[$(ii)$] \textit{The code associated to above ideals generate a
Hadamard code of the type} $[2^{n},n,2^{n-1}]_{2}$, $n\geq 2$.\medskip
\textit{\ }
\end{enumerate}
\end{theorem}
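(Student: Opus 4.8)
The plan is to exploit the structure of a finite Boolean algebra $B$ of order $2^n$ as a power set. By Stone's representation (or directly, since finite Boolean algebras are atomic), $B \cong \mathcal{P}(S)$ for a set $S$ with $|S| = n$; equivalently, by Remark \ref{rem3.14}(ii)--(iii), $B$ is the associated Boolean algebra of a finite Boolean ring, which decomposes as $\mathbb{Z}_2^n$ as a ring. First I would fix this identification and let $e_1,\dots,e_n$ be the $n$ atoms of $B$.

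For part $(i)$, I would show that the ideals of order $2^{n-1}$ are exactly the $n$ principal ideals $I_k = \{x \in B : x \wedge e_k = 0\}$ for $k = 1,\dots,n$; equivalently, under the ring picture, $I_k = \{(x_1,\dots,x_n) : x_k = 0\}$, the kernel of the $k$-th coordinate projection. Each such $I_k$ has exactly $2^{n-1}$ elements. To see there are no others: any ideal $I$ of $B$ (equivalently of the Boolean ring $\mathbb{Z}_2^n$) is a product of coordinate ideals, i.e. is determined by the set $T \subseteq \{1,\dots,n\}$ of coordinates on which its elements vanish, and $|I| = 2^{n-|T|}$. So $|I| = 2^{n-1}$ forces $|T| = 1$, giving precisely the $n$ ideals $I_1,\dots,I_n$. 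This is the step most in need of care — one must justify the classification of ideals of $\mathbb{Z}_2^n$ (or of $\mathcal{P}(S)$), but this is standard: an ideal closed under $\vee$ and downward closed under $\le$ is a down-set closed under joins, hence principal, generated by the join of its atoms, and its complement-of-support is a set $T$ as above.

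For part $(ii)$, I would invoke Proposition \ref{prop3.16}(ii) to conclude that the codewords $c_{I_1},\dots,c_{I_n}$ are linearly independent over $\mathbb{Z}_2$, so that $M_C$ is a genuine generating matrix of a binary linear code of length $2^n$ and dimension $n$. It then remains to identify this code with a Hadamard code of type $[2^n, n, 2^{n-1}]_2$. By Remark \ref{rem3.4}, it suffices to check that the $2^n$ columns of $M_C$ run over all $n$-bit vectors over $\mathbb{Z}_2$. Indexing the $2^n$ coordinates of the code by the elements $a \in B$, the column indexed by $a$ is $(\,\mu_{I_1}(a),\dots,\mu_{I_n}(a)\,)$, where $\mu_{I_k}$ is the characteristic function from Definition \ref{def3.3}. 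Writing $a = (a_1,\dots,a_n)$ in the ring coordinates, one has $a \in I_k \iff a_k = 0 \iff \mu_{I_k}(a) = 0$, so the $k$-th entry of the column at $a$ equals $1 - a_k$; hence the column at $a$ is $(1-a_1,\dots,1-a_n)$, i.e. the complement of $a$. As $a$ ranges over all $2^n$ elements of $\mathbb{Z}_2^n$, so does its complement, so the columns of $M_C$ are exactly all $n$-bit vectors. By Remark \ref{rem3.4} the resulting code is the Hadamard code $[2^n, n, 2^{n-1}]_2$ (the distance $2^{n-1}$ being automatic for this construction: every nonzero linear combination of the rows, being an affine-linear functional $a \mapsto \sum_{k \in K}(1-a_k) = |K| + \sum_{k\in K} a_k \bmod 2$ evaluated over all $a$, is balanced, taking the values $0$ and $1$ each $2^{n-1}$ times). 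This completes the proof. $\blacksquare$
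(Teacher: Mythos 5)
Your proof is correct. It follows the same overall outline as the paper's argument (pass to the associated Boolean ring, identify the $n$ ideals of order $2^{n-1}$, use Proposition \ref{prop3.16}(ii) for linear independence of the rows of $M_C$, and match against Remark \ref{rem3.4}), but you supply the details the paper leaves implicit and, in one place, you repair a genuine gap. For $(i)$ the paper says only ``it is clear''; your classification of the ideals of $\mathbb{Z}_2^{n}$ as the coordinate-vanishing ideals $I_T$ with $|I_T|=2^{n-|T|}$, so that $|I|=2^{n-1}$ forces $|T|=1$, is exactly the missing argument. For $(ii)$ the paper infers the minimum distance from the fact that each generator row has weight $2^{n-1}$; that alone does not determine the minimum distance, which is the minimum weight over \emph{all} nonzero codewords, not just over the rows of a generating matrix. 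Your observation that the column of $M_C$ indexed by $a$ is the complement of $a$, so that the columns run exactly once over all $n$-bit vectors, identifies the code with the Hadamard code of Remark \ref{rem3.4}, and your balancedness computation (every nonzero affine functional on $\mathbb{Z}_2^{n}$ with nonzero linear part takes each value $2^{n-1}$ times) gives the distance $2^{n-1}$ correctly --- and as a by-product re-proves the linear independence without appealing to Proposition \ref{prop3.16}. In short: correct, same strategy, but more complete than the source at the two points where completeness actually matters.
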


\begin{proof}
$(i).$ It is clear, since ideals in the Boolean algebra structure are ideals
in the associated Boolean ring and vice-versa.

$(ii).$ Let $I_{1},I_{2},...,I_{n}$ be the ideals of order $2^{n-1}$. With
the above notations, we consider a matrix $M_{C},$ with rows the codewords
associated to these ideals,%
\begin{equation*}
M_{C}=\left(
\begin{array}{c}
c_{I_{1}} \\
c_{I_{2}} \\
... \\
c_{I_{n}}%
\end{array}%
\right) .
\end{equation*}%
Due to the correspondence between the ideals in the Boolean algebra
structure, the ideals in the associated Boolean ring and Proposition \ref%
{prop3.16}, we have that the rows of the matrix $M_{C}$ are linearly
independent vectors. Since $I_{1},I_{2},...,I_{n}$ are the ideals of order $%
2^{n-1}$, the associated codewords have $2^{n-1}$ nonzero elements,
therefore the Hamming distance is $d_{H}=2^{n-1}$. From here, we have that $%
M_{C}$ is a generating matrix for the code $\mathcal{C}_{I_{1}I_{2},...I_{n}}
$, which is a Hadamard code of the type $[2^{n},n,2^{n-1}]_{2}$, $n\geq 2$%
.\medskip
\end{proof}

\begin{remark}
\label{rem3.18} A generating matrix $M_{C}$ of a Hadamard code $\mathcal{C}$
of the type $[2^{n},n,2^{n-1}]_{2}$, $n\geq 2$, has $2^{n-1}n$ elements
equal with $1$. If the matrix has the following form: on the row $i$ we have
the first $2^{n-i}$ elements equal to $1$, the next $2^{n-i}$ elements equal
to $0$, and so on, for $i\geq 1$, we call this form the \textit{Boolean form}
of the generating matrix of the Hadamard code $\mathcal{C}$ and we denote it
$M_{B}$.\medskip
\end{remark}

\begin{remark}
\label{rem3.19}

\begin{enumerate}
\item[$(i).$]   If $G$, a $r\times s$ matrix over a field $K$, is a
generating matrix for a linear code $\mathcal{C}$, then any matrix which is
row equivalent to $G$ is also a generating matrix for the code $\mathcal{C}$%
. Two row equivalent matrices of the same type have the same row space. The
row space of a matrix is the set of all possible linear combinations of its
row vectors, that means it is a vector subspace of the space $K^{s}$, with
dimension the rank of the matrix $G$, $rankG$. From here, we have that two
matrices are row equivalent if and only if one can be deduced to the other
by a sequence of elementary row operations.

\item[$(ii).$]  If $G$ is a generating matrix for a linear code $\mathcal{C}$%
, then, from the above notations, we have that $M_{C}$ and $M_{B}$ are row
equivalent, therefore these matrices generate the same Hadamard code $%
\mathcal{C}$ of the type $[2^{n},n,2^{n-1}]_{2}$, $n\geq 2$. \medskip
\end{enumerate}
\end{remark}

\begin{theorem}
\label{th3.20}\textbf{\ }\ \textit{With the above notations, let} \thinspace
$M_{B}$ \textit{be the Boolean form of a generating matrix of the Hadamard
code of the type} $[2^{n},n,2^{n-1}]_{2}$, $n\geq 2$.\ \textit{We can
construct a Boolean algebra} $\mathcal{B}$ \textit{of order} $2^{n}$ \textit{%
which has} $n$ \textit{ideals of order} $2^{n-1},$ \textit{with associated
codewords being the rows of a matrix }$M_{B}$.\medskip
\end{theorem}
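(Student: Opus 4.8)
The plan is to reverse the construction of Theorem \ref{th3.17}, starting from the combinatorial data in the matrix $M_{B}$ and manufacturing a Boolean algebra whose ideal lattice reproduces it. First I would observe that $M_{B}$ is an $n\times 2^{n}$ matrix over $\mathbb{Z}_{2}$ whose rows are, by the description in Remark \ref{rem3.18}, exactly the characteristic vectors of a nested family: row $i$ has its first $2^{n-i}$ coordinates equal to $1$, then $2^{n-i}$ zeros, repeated. In particular the $2^{n}$ columns of $M_{B}$ are pairwise distinct (reading a column top-to-bottom gives the binary expansion of the column index), so they are precisely all $n$-bit vectors over $\mathbb{Z}_{2}$, matching Remark \ref{rem3.4}. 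This is the key structural fact I will exploit: index the $2^{n}$ coordinate positions by the elements of $\mathbb{Z}_{2}^{n}$ via their columns.

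Next I would put the Boolean algebra structure on the index set itself. Let $\mathcal{B}=\mathcal{P}(\{1,2,\dots ,n\})$, the power set of an $n$-element set, with $\vee=\cup$, $\wedge=\cap$, $\partial=$ complement, $0=\emptyset$, $1=\{1,\dots,n\}$; this is a Boolean algebra of order $2^{n}$. Equivalently, via Remark \ref{rem3.14}(iii), I can think of $\mathcal{B}$ as the Boolean ring $\mathbb{Z}_{2}^{n}$. Identify each element of $\mathcal{B}$ with one of the $2^{n}$ coordinate positions through the column-indexing above. Then for each $k\in\{1,\dots,n\}$ set $I_{k}=\{S\in\mathcal{B}: k\notin S\}$, i.e. the principal ideal generated by $\partial\{k\}$; each $I_{k}$ is an ideal of $\mathcal{B}$ of order $2^{n-1}$, and by Remark \ref{rem3.15}(iii) it is simultaneously an ideal of the associated Boolean ring. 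By part (i) of Theorem \ref{th3.17} these are all $n$ ideals of order $2^{n-1}$, so the list is complete and unambiguous.

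The remaining step is bookkeeping: check that the codeword $c_{I_{k}}$ attached to $I_{k}$ (Definition \ref{def3.3}) is exactly row $k$ of $M_{B}$. By construction $c_{I_{k}}$ has a $1$ in the position indexed by $S$ precisely when $k\notin S$, and with the column-indexing chosen so that position $j$ (for $j=0,\dots,2^{n}-1$) corresponds to the subset $S$ whose indicator is the binary digits of $j$, the positions with $k\notin S$ form exactly the blocks prescribed in Remark \ref{rem3.18} for row $k$ — after, if necessary, fixing the ordering convention (which bit is most significant) so that the "first $2^{n-k}$ ones, then $2^{n-k}$ zeros" pattern comes out right. This is the only place where care is needed, and it is the main obstacle: one must choose the bijection between coordinate positions and elements of $\mathcal{B}$ so that all $n$ rows line up with $M_{B}$ simultaneously, not just one of them. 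Since the columns of $M_{B}$ are all distinct $n$-bit vectors, such a bijection exists — read each column as an element of $\mathbb{Z}_{2}^{n}=\mathcal{B}$ — and with it the rows of $M_{B}$ become precisely $c_{I_{1}},\dots,c_{I_{n}}$. Invoking Theorem \ref{th3.17}(ii) (or directly Proposition \ref{prop3.16} together with the Hamming-weight count $2^{n-1}$), the matrix $M_{B}$ with these rows generates the Hadamard code of type $[2^{n},n,2^{n-1}]_{2}$, which is what we wanted; and by Remark \ref{rem3.19}(ii) this is the same code one started with. $\blacksquare$
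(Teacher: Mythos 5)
Your proposal is correct and is essentially the same argument as the paper's: both construct the Boolean algebra of order $2^{n}$ as the $n$-fold product of the two-element Boolean algebra (the paper presents it as a product of two-element Wajsberg algebras ordered lexicographically, you as $\mathcal{P}(\{1,\dots ,n\})\cong \mathbb{Z}_{2}^{n}$ with positions indexed by the columns of $M_{B}$), take the $n$ ideals $I_{k}=\{x:x_{k}=0\}$ of order $2^{n-1}$, and verify that the chosen enumeration of the $2^{n}$ elements makes $c_{I_{k}}$ equal to row $k$ of $M_{B}$. The only point needing care --- aligning the position-indexing convention so that all $n$ rows match simultaneously --- is exactly the role played by the paper's lexicographic ordering, and you flag and resolve it correctly.
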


\begin{proof}
\  We consider the set $B_{i}=\{0_{i},1_{i}\}$, with $0_{i}\leq _{i}1_{i}$, $%
i\in \{1,2,...,n\}$. On $B_{i}$ we define the following multiplication:$%
\begin{tabular}{l|ll}
$\circ _{i}$ & $0_{i}$ & $1_{i}$ \\ \hline
$0_{i}$ & $1_{i}$ & $1_{i}$ \\
$1_{i}$ & $0_{i}$ & $1_{i}$%
\end{tabular}%
.$

It is clear that $\left( B_{i},\circ _{i},^{\prime },1_{i}\right) $, where $%
0_{i}^{\prime }=1_{i}$ and $1_{i}^{\prime }=0_{i}$, is a Wajsberg algebra of
order $2$. On $B_{i}$ we have the following partial order relation $%
x_{i}\leq _{i}y_{i}~$if~and~only~if~$x_{i}\circ _{i}y_{i}=1_{i}.$

Therefore, on the Cartesian product $\mathcal{B}=B_{1}\times B_{2}\times
...\times B_{n}$ we define a component-wise multiplication, denoted $%
\diamond $. From here, we have that $\left( \mathcal{B},\diamond ,^{\prime },%
\mathbf{1}\right) $, where $\left( x_{1},x_{2},...,x_{n}\right) ^{\prime
}=\left( x_{1}^{\prime },x_{2}^{\prime },...,x_{n}^{\prime }\right) $ and $%
\mathbf{1=(}1,1,...,1)$, is a Wajsberg algebra of order $2^{n}$. We write
and denote the elements of $\mathcal{B}$ in the lexicographic order. The
element $\left( 0_{1},0_{2},...,0_{n}\right) $, denoted $\left(
0,0,...,0\right) $ or $\mathbf{0}$ it is the first element in $\mathcal{B}.$
With $\mathbf{1}$ we denote $\mathbf{(}1,1,...,1)=\left(
1_{1},1_{2},...,1_{n}\right) $ which is the last element in $\mathcal{B}$.
From Definition 3.8, on $\mathcal{B}$ we have the following partial order
relation\
\begin{equation*}
x\leq _{\mathcal{B}}y~\text{if~and~only~if~}x\diamond y=\mathbf{1}\text{.}
\end{equation*}%
It is clear that on $\mathcal{B}$ we have that $x\leq _{\mathcal{B}}y$ if
and only if $x_{i}\leq _{i}y_{i}$, for $i\in \{1,2,...,n\}$. From the
Wajsberg algebra structure we obtain the $MV$-algebra structure on $\mathcal{%
B}$, which is a Boolean algebra structure, with the multiplication $x\oplus
y=x^{\prime }\diamond y$ ($\oplus $ which is the component-wise XOR-sum).
The ideals of order $2^{n-1}$ in this Boolean algebra of order $2^{n}$ are
generated by the maximal elements in respect to the order relation $\leq _{%
\mathcal{B}}$. These elements have $n-1$ "nonzero" components. \ First
maximal element, in the lexicographic order, is $m_{1}=\left(
0,1,1,...,1\right) $. This element generates an ideal of order $2^{n-1}$,
containing all elements $x_{j}$ equal or less than $m_{1}$ in respect to the
order relation $\leq _{\mathcal{B}}$. \ Indeed, all these elements $x_{j}$
are maximum $n-2$ nonzero components and $x_{ji}\leq _{i}m_{1i}$, $i\in
\{1,2,...,n\},j\in \{1,2,...,2^{n-1}\}$, with the first component always
zero. We denote with $J_{1}$ the set all elements equal or less than $m_{1}$%
. It results that $J_{1}$ with the multiplication $\oplus $ is isomorphic to
the vector space $\mathbb{Z}_{2}^{n-1}$, therefore $J_{1}$ is an ideal in $%
\mathcal{B}$. The codeword corresponding to this ideal is $\left(
1,1,...,1,0,0,...,0\right) $ in which the first $2^{n-1}$ positions are
equal with $1$ and the next $2^{n-1}$ are $0$ and represent the first row of
the matrix $M_{\mathcal{B}}$. The next maximal element in lexicographic
order is $m_{2}=\left( 1,0,1,...,1\right) ,$ with zero on the second
position and $1$ in the rest. This element generates an ideal $J_{2}~$of
order $2^{n-1}$, containing all elements $x_{j}$ equal or less than $m_{2}$
in respect to the order relation $\leq _{\mathcal{B}}$. All these elements $%
x_{j}$ are maximum $n-2$ nonzero components and $x_{ji}\leq _{i}m_{1i}$, $%
i\in \{1,2,...,n\},j\in \{1,2,...,2^{n-1}\},$ with the second component
always zero. With the same reason as above, we have that $J_{2},$ with the
multiplication $\oplus ,$ is isomorphic to the vector space $\mathbb{Z}%
_{2}^{n-1}$, therefore $J_{2}$ is an ideal in $\mathcal{B}$. The codeword
corresponding to this ideal is $\left(
1,1,...,1,0,0,...,0,1,1,...,0,...\right) $, with the first $2^{n-2}$
positions equal with $1$, the next $2^{n-2}$ are $0$ and so on. This
codeword represent the second row of the matrix $M_{\mathcal{B}}$, etc.
\end{proof}

\begin{example}
\label{ex3.21} In \cite{FHSV}, the authors described all Wajsberg algebras
of order less or equal with $9$. In the following, we provide some examples
of codes associated to these algebras.

\textbf{Case} $n=4.$ We have two types of Wajsberg algebras of order $4$. \
First type is a totally ordered set which has no proper ideals and the
second type is a partially ordered Wajsberg algebra, $W=\{0,a,b,1\}.$ This
algebra has the multiplication given by the following table:
\begin{equation*}
\begin{tabular}{l|llll}
$\circ $ & $0$ & $a$ & $b$ & $1$ \\ \hline
$0$ & $1$ & $1$ & $1$ & $1$ \\
$a$ & $b$ & $1$ & $b$ & $1$ \\
$b$ & $a$ & $a$ & $1$ & $1$ \\
$1$ & $0$ & $a$ & $b$ & $1$%
\end{tabular}%
\text{.}
\end{equation*}

This algebra has two proper ideals $I=\{0,a\}$ and $J=\{0,b\}$. The
associated $MV$-algebra of this algebra is a Boolean algebra. We consider $%
c_{I}=\left( 1,1,0,0\right) $ and $c_{J}=\left( 1,0,1,0\right) $ the
codewords attached to the ideals $I$ and $J$. The matrix
\begin{equation*}
M_{C}=\left(
\begin{array}{cccc}
1 & 1 & 0 & 0 \\
1 & 0 & 1 & 0%
\end{array}%
\right)
\end{equation*}%
is the generating matrix for the Hadamard code of the type $\left(
2^{2},2,2\right) $. As in Remark \ref{rem3.4}, this matrix has as columns
all $2$-bits vectors over $\mathbb{Z}_{2}:\{11,10,01,00\}$.

\textbf{Case} $n=8$. We consider the partially ordered Wajsberg algebra, $%
W=\{0,a,b,c,d,e,f,1\}$ with the multiplication given by the following table:
\begin{equation*}
\begin{tabular}{l|llllllll}
$\circ $ & $0$ & $a$ & $b$ & $c$ & $d$ & $e$ & $f$ & $1$ \\ \hline
$0$ & $1$ & $1$ & $1$ & $1$ & $1$ & $1$ & $1$ & $1$ \\
$a$ & $f$ & $1$ & $f$ & $1$ & $f$ & $1$ & $f$ & $1$ \\
$b$ & $e$ & $e$ & $1$ & $1$ & $e$ & $e$ & $1$ & $1$ \\
$c$ & $d$ & $e$ & $f$ & $1$ & $d$ & $e$ & $f$ & $1$ \\
$d$ & $c$ & $c$ & $c$ & $c$ & $1$ & $1$ & $1$ & $1$ \\
$e$ & $b$ & $c$ & $b$ & $c$ & $f$ & $1$ & $f$ & $1$ \\
$f$ & $a$ & $a$ & $c$ & $c$ & $e$ & $e$ & $1$ & $1$ \\
$1$ & $0$ & $a$ & $b$ & $c$ & $d$ & $e$ & $f$ & $1$%
\end{tabular}%
.
\end{equation*}

All proper ideals are of the form $I_{1}=\{0,a\}$, $I_{2}=\{0,b\}$, $%
I_{3}=\{0,d\}$, $I_{4}=\{0,a,b,c\}$, $I_{5}=\{0,a,d,e\}$, $I_{6}=\{0,b,d,f\}$
are also prime ideals. This algebra has three ideals of order three $%
I_{4},I_{5},I_{6}$. The associated $MV$-algebra of this algebra is a Boolean
algebra. We consider $c_{I_{4}}=\left( 1,1,1,1,0,0,0,0\right) ,$ $%
c_{I_{5}}=\left( 1,1,0,0,1,1,0,0\right) ,c_{I_{6}}=\left(
1,0,1,0,1,0,1,0\right) $ the codewords attached to the ideals $%
I_{4},I_{5},I_{6}$. The matrix
\begin{equation*}
M_{C}=\left(
\begin{array}{cccccccc}
1 & 1 & 1 & 1 & 0 & 0 & 0 & 0 \\
1 & 1 & 0 & 0 & 1 & 1 & 0 & 0 \\
1 & 0 & 1 & 0 & 1 & 0 & 1 & 0%
\end{array}%
\right)
\end{equation*}%
is the generating matrix for the Hadamard code $\left( 2^{3},2,2^{2}\right) $%
. As in Remark \ref{rem3.4}, this matrix has as columns all $3$-bits vectors
over $\mathbb{Z}_{2}$, namely $\{111,110,101,100,011,010,001,000\}$.

\end{example}

\begin{remark}
\label{rem3.22} (\cite{FHSV}, case $n=9$) If a finite Wajsberg algebra has
an even number of proper ideals, we can consider their associated codewords,
as above. The obtained generating matrix generate a linear code with Hamming
distance $\geq 3$. Indeed, for $n=9$, we consider the partially ordered
Wajsberg algebra, $W=\{0,a,b,c,d,e,f,g,1\}$ with the multiplication given by
the following table:
\begin{equation*}
\begin{tabular}{l|lllllllll}
$\circ $ & $0$ & $a$ & $b$ & $c$ & $d$ & $e$ & $f$ & $g$ & $1$ \\ \hline
$0$ & $1$ & $1$ & $1$ & $1$ & $1$ & $1$ & $1$ & $1$ & $1$ \\
$a$ & $g$ & $1$ & $1$ & $g$ & $1$ & $1$ & $g$ & $1$ & $1$ \\
$b$ & $f$ & $g$ & $1$ & $f$ & $g$ & $1$ & $f$ & $g$ & $1$ \\
$c$ & $e$ & $e$ & $e$ & $1$ & $1$ & $1$ & $1$ & $1$ & $1$ \\
$d$ & $d$ & $e$ & $e$ & $g$ & $1$ & $1$ & $g$ & $1$ & $1$ \\
$e$ & $c$ & $d$ & $e$ & $f$ & $g$ & $1$ & $f$ & $g$ & $1$ \\
$f$ & $b$ & $a$ & $b$ & $e$ & $e$ & $e$ & $1$ & $1$ & $1$ \\
$g$ & $a$ & $b$ & $b$ & $d$ & $e$ & $e$ & $g$ & $1$ & $1$ \\
$1$ & $0$ & $a$ & $b$ & $c$ & $d$ & $e$ & $f$ & $g$ & $1$%
\end{tabular}%
.
\end{equation*}%
All proper ideals are $I_{1}=\{0,a,b\}$, $I_{2}=\{0,c,f\}$ and are also
prime ideals. We consider $c_{I_{1}}=\left( 1,1,1,0,0,0,0,0,0\right) $ and $%
c_{I_{2}}=\left( 1,0,0,1,0,0,1,0,0\right) ,$ the codewords attached to the
ideals $I_{1},I_{2}$. The matrix
\begin{equation*}
M_{C}=\left(
\begin{array}{ccccccccc}
1 & 1 & 1 & 0 & 0 & 0 & 0 & 0 & 0 \\
1 & 0 & 0 & 1 & 0 & 0 & 1 & 0 & 0%
\end{array}%
\right)
\end{equation*}%
is the generating matrix for the linear code of the form $\left(
9,2,3\right) ,\mathcal{C}_{I_{1}I_{2}}$. The even numbers of ideals assure
us that the rows in the generating matrix are linear independent vectors.
\end{remark}

\section{Conclusions}

In this paper, based on ideals,  we investigate residuated lattices from
fuzzy set theory and lattice theory point of view. Also we found connections
between the fuzzy sets associated to ideals in a Boolean algebras and
Hadamard codes. As a further research, we will study other connections
between fuzzy sets and some type of algebras of logic.

\medskip Cristina Flaut

{\small Faculty of Mathematics and Computer Science, Ovidius University,}

{\small Bd. Mamaia 124, 900527, Constan\c{t}a, Rom\^{a}nia,}

{\small \ http://www.univ-ovidius.ro/math/}

{\small e-mail: cflaut@univ-ovidius.ro; cristina\_flaut@yahoo.com}

\bigskip

Dana Piciu

{\small Faculty of \ Science, University of Craiova, }

{\small A.I. Cuza Street, 13, 200585, Craiova, Rom\^{a}nia,}

{\small http://www.math.ucv.ro/dep\_mate/}

{\small e-mail: dana.piciu@edu.ucv.ro, piciudanamarina@yahoo.com}

\bigskip

Bianca Liana Bercea

{\small PhD student at Doctoral School of Mathematics,}

{\small Ovidius University of Constan\c{t}a, Rom\^{a}nia}

{\small e-mail: biancaliana99@yahoo.com}

\end{document}